\documentclass[a4paper, 11pt]{amsart}

\usepackage{hyperref}
\usepackage{cleveref}

\usepackage{amssymb}
\usepackage{amsmath}
\usepackage{amsthm}
\usepackage{wasysym}
\usepackage{enumitem}
\usepackage{mathtools}
\usepackage{pxfonts}
\usepackage{dsfont}
\usepackage{stmaryrd}
\usepackage[english]{babel}
\usepackage[T1]{fontenc}
\usepackage[utf8]{inputenc}

\usepackage{tikz-cd}
\usepackage{bbold}
\usepackage{verbatim}
\usetikzlibrary{arrows.meta}
\usetikzlibrary {decorations.pathmorphing}

\newtheorem{thm}{Theorem}[section]
\newtheorem{lemma}[thm]{Lemma}
\newtheorem{cor}[thm]{Corollary}
\newtheorem{prop}[thm]{Proposition}
\newtheorem*{innerrep}{\repname}

\newenvironment{thmrep}[1]
  {\def\repname{Theorem \ref{#1}}\begin{innerrep}}
  {\end{innerrep}}

\newenvironment{correp}[1]
  {\def\repname{Corollary \ref{#1}}\begin{innerrep}}
  {\end{innerrep}}
\theoremstyle{definition}
\newtheorem{definition}[thm]{Definition}

\theoremstyle{remark}
\newtheorem{remark}[thm]{Remark}

\newtheorem{claim}[thm]{Claim}
\newtheorem{question}[thm]{Question}
\newtheorem{obs}[thm]{Observation}

\numberwithin{equation}{section}

\newcommand{\RR}{\mathbf{R}}

\newcommand{\acts}{\curvearrowright}
\newcommand{\G}{\mathcal{G}}

\newcommand{\Z}{\mathbf{Z}}
\newcommand{\N}{\mathbf{N}}

\DeclareMathOperator{\Cay}{\mathrm{Cay}}
\DeclareMathOperator{\Sch}{\mathrm{Sch}}

\DeclareMathOperator{\id}{id}
\DeclareMathOperator{\cost}{cost}

\DeclareMathOperator{\Leb}{Leb}

\usepackage[draft,inline,nomargin,index]{fixme}

\fxsetup{theme=color,mode=multiuser}
\FXRegisterAuthor{kw}{akw}{\color{red}KW}
\FXRegisterAuthor{ap}{aap}{\color{purple}AP}
\definecolor{DarkGreen}{RGB}{30,150,100}
\FXRegisterAuthor{ct}{act}{\color{DarkGreen}CT}

\title{Cost of one-relator groups}
\author{Antoine Poulin}
\author{Konrad Wr\'obel}

\begin{document}
\begin{abstract}
    For any infinite one-relator group $\Gamma=\langle S \mid w^m\rangle$, we prove that $\cost(\Gamma)=|S|-\frac{1}{m}$. For such groups, this gives $\beta^{(2)}_1(\Gamma)=\cost(\Gamma) - 1$, answering a special case of Gaboriau's question on the relationship between cost and first $\ell^2$-Betti number.
\end{abstract}

\maketitle

\section{Introduction} 
In \cite{Ga02L2}, Gaboriau proved the fundamental inequality $\cost(\Gamma)-1\ge \beta^{(2)}_1(\Gamma)$ relating the cost of an infinite group $\Gamma$ to its first $\ell^2$-Betti number, and asked:
\begin{question}[Gaboriau]\label{quest:costvsl2}
    Is $\cost(\Gamma) - 1 = \beta^{(2)}_1(\Gamma)$ for every countably infinite group $\Gamma$? 
\end{question}

Question \ref{quest:costvsl2} has a positive answer in many cases, including treeable groups \cite{Ga00} and many classes of groups with vanishing first $\ell^2$-Betti number, such as products of infinite groups \cite{Ga00}, residually finite boundedly generated groups \cite{article:shusterman}, inner amenable groups \cite{TD:InnerAmenable}, and Kazhdan groups \cite{HutchcroftPete}. 

One-relator groups form a central class in combinatorial and geometric group theory, with a rich century-old history of study (for a recent survey, see \cite{linton2025theoryonerelatorgroupshistory}). 
Dicks and Linnell computed the first $\ell^2$-Betti numbers of one-relator groups \cite{DicksLinnell}, yielding many examples with nonvanishing first $\ell^2$-Betti number. 
One-relator groups thus provide a natural test case for Question \ref{quest:costvsl2} outside the setting where $\beta_1^{(2)}(\Gamma) = 0$.

We compute the cost of one-relator groups and answer Question \ref{quest:costvsl2} in that case. 
\begin{thm}\label{thm:mainl2}
    Suppose that $\Gamma = \langle S \mid w^m \rangle$ is an infinite one-relator group with $w$ not a proper power and $m\ge 1$. Then,
    \[\cost(\Gamma) -1= \beta_1^{(2)}(\Gamma)=|S|-1 - \frac{1}{m}.\]
\end{thm}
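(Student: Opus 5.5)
The equality $\beta_1^{(2)}(\Gamma)=|S|-1-\frac1m$ is the Dicks--Linnell computation \cite{DicksLinnell}. Gaboriau's inequality $\cost(\Gamma)-1\ge\beta_1^{(2)}(\Gamma)$ gives the lower bound $\cost(\Gamma)\ge |S|-\frac1m$. So the whole content is the upper bound
\[\cost(\Gamma)\le |S|-\tfrac1m.\]
To prove it, I will exhibit a free probability measure preserving action of $\Gamma$ and, for it, a graphing of cost at most $|S|-\frac1m+\varepsilon$ for every $\varepsilon>0$.

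I would proceed by induction on the length of $w$, following the Magnus--Moldavanskii hierarchy. The base cases are the following.
\begin{itemize}
\item If $w$ is primitive, or $|S|=1$, then $\Gamma\cong F_{|S|-1}*\Z/m$, which is treeable. Its cost is $|S|-1+(1-\frac1m)$ by Gaboriau's free product formula.
\item If $m=1$ and $\Gamma$ is torsion-free, we still need the bound, and this is handled by the same induction.
\end{itemize}

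For the inductive step, after a Nielsen transformation and possibly passing to an HNN-presentation, we may assume some generator $t$ has exponent sum zero in $w$. Then $\Gamma$ is an HNN extension $H*_{A}$ with base $H=\langle S_0\mid w'^m\rangle$, where $w'$ is the rewriting of $w$ in the conjugates $x_i=t^ixt^{-i}$, $w'$ is shorter, and $A,B$ are Magnus subgroups, free of rank $r=|S_0|-1$. (When no exponent-sum-zero generator exists, embed $\Gamma$ into such an HNN extension, or pass to a finite-index subgroup, using the formula $\cost(\Gamma')-1=[\Gamma:\Gamma'](\cost(\Gamma)-1)$ for finite-index subgroups.) By induction, $\cost(H)\le|S_0|-\frac1m$.

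For HNN extensions I would use the Gaboriau-type bound
\[\cost(H*_A)\le \cost(H)+1-\cost(A).\]
This holds when $A$ is amenable, but here $A$ is free of rank possibly $>1$, so $\cost(A)=r$ and the formula does not apply as stated. The correct substitute is to build the graphing concretely.
\begin{enumerate}
\item Take a low-cost graphing $\mathcal{G}_H$ for the restricted action of $H$.
\item Add the single edge $t$ with cost $1$.
\item Observe that the generators of $B=tAt^{-1}$ are then redundant: their edges can be pruned, because each is a composition of $t$, edges of $A$, and $t^{-1}$.
\end{enumerate}
Keeping the bookkeeping exact requires arranging that the graphing of $H$ contains the free generators of $A$ (or $B$) as full-measure edges. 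The crucial point is therefore a strengthened inductive hypothesis: $H$ admits graphings of cost $\le|S_0|-\frac1m+\varepsilon$ that contain the free generators of a given Magnus subgroup as literal edges of full domain. Equivalently, the cost is achieved "relative to" the Magnus subgroup. Proving this relative statement through the hierarchy is the main obstacle.

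To attack that obstacle, I would first try to realize the relator's contribution locally. In the base case $\Z/m * F$, the torsion generator contributes cost $1-\frac1m$, achieved by an edge defined on a fundamental domain of measure $1-\frac1m$ for the finite cyclic subgroup. I would then carry this partial edge through the induction, always attaching it to a letter of $w$ not lying in the chosen Magnus subgroup. This is possible by the Freiheitssatz-style choice of which generator to omit. With that, the count at each stage becomes
\[(|S_0|-\tfrac1m)+1-r+r=|S|-\tfrac1m,\]
using $|S|=|S_0|-r+1$ after identifying conjugates, and the induction closes.
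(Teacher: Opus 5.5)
You have the lower bound right: Dicks--Linnell together with Gaboriau's inequality $\beta_1^{(2)}(\Gamma)\le\cost(\Gamma)-1$ gives it. The upper bound $\cost(\Gamma)\le|S|-\frac1m$, which is the whole content, is not established.

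\textbf{The main gap: the HNN step.} Your decisive step is the HNN extension $H*_A$ over a Magnus subgroup that is free of rank $r$. For $r\ge 2$ this subgroup is non-amenable, so no Gaboriau-type formula applies. You replace the formula by a strengthened ``relative'' inductive hypothesis, which you explicitly leave unproved, and the sketched remedy does not close.
\begin{enumerate}
\item To prune $r$ letters of $S_0$ via $x_i=tx_{i-1}t^{-1}$, the kept letters must be full edges.
\item The $\frac1m$ saving must sit on a kept letter (or on $t$). A saving placed on a pruned letter is thrown away when that letter is removed.
\item Once a kept letter has a $\frac1m$-fraction of its edges removed, the replacement paths for the pruned edges run through removed edges.
\item Those removed edges must in turn be replaced along the relator cycle, which again contains pruned letters, and so on.
\end{enumerate}
Nothing in your plan forces this recursive replacement to terminate, and that termination is exactly the difficulty. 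Your bookkeeping shows the same problem: $(|S_0|-\frac1m)+1-r+r=|S_0|+1-\frac1m=|S|+r-\frac1m$, not $|S|-\frac1m$. Separately, by your own identity $|S|=|S_0|-r+1$, the rank of the Magnus subgroup is $|S_0|-|S|+1$, not $|S_0|-1$.

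\textbf{A secondary gap: no exponent-sum-zero generator.} Embedding $\Gamma$ into a larger one-relator group gives no upper bound on $\cost(\Gamma)$ by itself, since cost is not monotone under passing to subgroups. You would need Gaboriau's additivity for amalgamation over the infinite cyclic subgroup to transfer the bound. Passing to a finite-index subgroup does not help either: such subgroups are no longer one-relator groups, so your induction hypothesis does not cover them.

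\textbf{How the paper handles this.} It avoids the hierarchy altogether. It writes $\Gamma=\langle\Lambda,\Z\mid w^m\rangle$ with $\Lambda$ free. Howie's results show that $w^m$ is an embedded relation and that $\langle\Lambda,\Z\mid w\rangle$ is locally indicable, and a Conradian order on this quotient yields a $P$-positive system. In a free extension of the associated coset process, the paper removes the $s$-edges over a transversal of $\langle\overline w\rangle$, except on a set of measure at most $\varepsilon$ (Theorem \ref{thm:cutRelator}). Lemma \ref{lem:disamb-induction} is precisely the termination argument you are missing: it shows that iteratively replacing cut edges by the rest of the relator cycle stops after finitely many steps. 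Combining this with a cheap graphing of $\Lambda$ via coinduction gives $\cost(\Gamma)\le|S|-\frac1m$ directly.
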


Our result recovers the Dicks--Linnell formula for the first $\ell^2$-Betti number. Theorem \ref{thm:mainl2} is a special case of a result about one-relator products.
\begin{thmrep}{thm:oneRelatorProduct}
    Let $\Gamma=(\Lambda\ast\mathbf Z )/\langle\!\langle g^m\rangle\!\rangle$ where $\Lambda$ is locally indicable, $g\in \Lambda\ast\Z$, and $m\ge 1$. Assume $g$ is not conjugate to an element of $\Lambda$ and is not a proper power.
    Then 
    \[
    \cost(\Gamma)\le \cost(\Lambda)+1-\frac{1}{m}.
    \]

    Moreover, if $\beta_1^{(2)}(\Lambda)=\cost(\Lambda)-1$, then $\beta_1^{(2)}(\Gamma)=\cost(\Gamma)-1=\cost(\Lambda)-\frac{1}{m}$.
\end{thmrep}

The main technical ingredient in proving the upper bound on cost is a cutting method for Schreier graphs of certain actions (see Theorem \ref{thm:cutRelator}). The method applies when, after passing to a suitable locally indicable quotient, the relevant relation becomes embedded.
As a special case, in a locally indicable group, given an embedded relation $w$ and a generator $s$ appearing in $w$, we may cut a $(1-\varepsilon)$-proportion of edges labeled by $s$ in the Schreier graph.
In the one-relator product setting, the quotient $(\Lambda\ast \Z )/\langle\!\langle g\rangle\!\rangle$ is locally indicable via classical one-relator product theory \cite{Howie:Freiheitssatz, Howie:locInd}, allowing us to make use of the full Theorem \ref{thm:cutRelator} to cut a $\frac{1}{m}$ proportion of edges labeled by $s$ while maintaining connectivity.

The connection to $\ell^2$-Betti numbers in Theorem \ref{thm:oneRelatorProduct} comes from the lower bound on first $\ell^2$-Betti number established by \cite[Theorem 3.2]{PetersonThom} in terms of imposed relations and Gaboriau's inequality $\beta_1^{(2)}(\Gamma)\le\cost(\Gamma)-1$ for infinite groups.

By applying Theorem \ref{thm:oneRelatorProduct} inductively, we can compute cost and first $\ell^2$-Betti number for groups with finite \textit{reducible} presentations without proper powers. 
\begin{correp}{cor:reducible}
    Let $\Gamma=\langle S \mid R\rangle$ be an infinite group with a finite reducible presentation with no proper powers. Then \[\cost(\Gamma)-1=\beta^{(2)}_1(\Gamma)=|S|-|R|-1.\] 
\end{correp}

Another question of Gaboriau concerns whether every countably infinite group has \textit{fixed price}, i.e., if all of its free pmp actions have the same cost. This question has recently seen renewed activity, with several new positive results \cite{TD:InnerAmenable, FMW, mellick2023gaboriau, Khezeli:Products, bevilacqua2025metric}. While our techniques compute the cost of one-relator groups, in Remark \ref{rmk:nofixedprice} we identify an obstruction to using them to prove fixed price.

\subsection*{Acknowledgements}

We would like to thank Miko\l aj Frączyk, Damien Gaboriau, Marco Linton, Sam Mellick, and Anush Tserunyan for helpful discussions and Fran\c cois Le Ma\^itre for various comments. 

\section{Preliminaries}

\subsection{Cost of graphings, actions and groups}

Let $(X,\mu)$ be a standard probability space. A \textbf{Borel graph} $\mathcal G$  on $X$ is a symmetric irreflexive Borel subset of $X^2$. For $x\in X$, write 
\[
\deg_\G(x)=|\{y\in X: (y,x)\in \G\}|.
\]
The \textbf{cost} of a Borel graph $\G$ is
\[
\mathcal C_\mu(\G)=\frac 12\int_X\deg_\G(x)\;d\mu(x).
\]
For example, for a graph on a finite set, $\mathcal{C}_\mu(\G) = \frac{|E|}{|V|}$ by the handshake lemma.

Let $\Gamma\acts (X,\mu)$ be a free pmp action of a countable group $\Gamma$. A \textbf{graphing} of the action $\Gamma\acts (X,\mu)$ is a Borel graph on $X$ whose connected components are precisely $\Gamma$-orbits on a conull set.

The \textbf{cost} of the action is 
\[
\mathcal C_\mu(\Gamma\acts (X,\mu))=\inf_{\G}\mathcal C_\mu(\G)
\]
where the infimum is taken over all graphings $\G$ of $\Gamma\acts (X,\mu)$.
In general, this infimum need not be attained; a graphing achieves this infimum if and only if it is acyclic \cite{Ga00}. 

Finally, the \textbf{cost of a group} is
\[
\cost(\Gamma)=\inf_{\Gamma \acts (X,\mu)}\mathcal C_\mu(\Gamma\acts(X,\mu))
\]
where the infimum is taken over free pmp actions $\Gamma\acts (X,\mu)$.
For more about cost, see \cite{tioe, GaboriauSurvey}.

A crucial example of a graphing is a Schreier graph. Given an action $\Gamma \acts X$ and a generating set $S$ for $\Gamma$, the \textbf{Schreier graph} is $\Sch(\Gamma \acts X, S) := \left\{(s.x,x)^{\pm1} \in X^2 : x\in X, s\in S\right\}$.

\subsection{Coset processes}
    Let $\Gamma$ be a countable group. For a subgroup $\Delta \le \Gamma$, write $(X_\Delta,\mu_\Delta)=([0,1]^{\Gamma/\Delta},\Leb^{\otimes \Gamma/\Delta})$. Define the \textbf{(Bernoulli) coset process associated to $\Delta$} to be the pmp action $\Gamma \acts (X_\Delta, \mu_\Delta)$ acting via the left shift. If $\mathcal D$ is a countable family of subgroups of $\Gamma$, we call the diagonal action $\Gamma \acts \prod_{\Delta\in \mathcal D} (X_\Delta, \mu_\Delta)$ the \textbf{(Bernoulli) coset process associated to $\mathcal D$}. These are sometimes referred to as generalized Bernoulli shifts in the literature.

We will make use of the following elementary fact about coset processes. 

\begin{prop}\label{prop:finIndexCosetProcess}
    Let $\Delta \le \Lambda\le \Gamma$ with $[\Lambda:\Delta]<\infty$. Suppose $\Gamma \acts (X_\Delta,\mu_\Delta)$ is the coset process associated to $\Delta$. 
    Then there exists a $\Lambda$-equivariant Borel map $\psi: X_0\to \Lambda/\Delta$ defined on a $\Gamma$-invariant conull set $X_0\subseteq X_\Delta$ such that the push-forward measure $\psi_*(\mu_\Delta)$ is uniform on $\Lambda/\Delta$.
\end{prop}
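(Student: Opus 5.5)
The idea is to pick, inside each finite block of cosets, the coset carrying the smallest label. Write $\Lambda/\Delta=\{\lambda_1\Delta,\dots,\lambda_n\Delta\}$ with $n=[\Lambda:\Delta]$. A point $x\in X_\Delta=[0,1]^{\Gamma/\Delta}$ assigns a real number $x(\gamma\Delta)$ to each coset $\gamma\Delta$. For $x$ in
\[
X_0=\{x\in X_\Delta : x \text{ is injective on } \Gamma/\Delta\},
\]
define $\psi(x)$ to be the unique coset $\lambda\Delta\in\Lambda/\Delta$ minimizing $x(\lambda^{-1}\cdot\lambda_i\Delta)$ over $i$. The precise convention should be fixed to match the left shift $(\gamma x)(c)=x(\gamma^{-1}c)$. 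Concretely, I set
\[
\psi(x)=\lambda\Delta \quad\text{iff}\quad x(\lambda\Delta)=\min_{1\le i\le n} x(\lambda_i\Delta),
\]
so $\psi(x)$ is the coset in $\Lambda/\Delta$ carrying the smallest label among the finitely many cosets contained in $\Lambda$.

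The steps, in order, are as follows.

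\textbf{$X_0$ is Borel, $\Gamma$-invariant and conull.} $X_0$ is a countable intersection of the Borel sets $\{x(c)\neq x(c')\}$ over pairs of distinct cosets $c\neq c'$. Each such set is conull by Fubini, since the coordinates are independent Lebesgue variables. The left shift only permutes coordinates, so $X_0$ is $\Gamma$-invariant.

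\textbf{$\psi$ is Borel.} It is determined by finitely many coordinate comparisons.

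\textbf{$\psi$ is $\Lambda$-equivariant.} For $\lambda'\in\Lambda$ we have $(\lambda'x)(\lambda\Delta)=x(\lambda'^{-1}\lambda\Delta)$. Left multiplication by $\lambda'$ permutes $\Lambda/\Delta$, so the minimizer for $\lambda'x$ is $\lambda'$ times the minimizer for $x$. That is, $\psi(\lambda' x)=\lambda'\psi(x)$.

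\textbf{The push-forward is uniform.} The $n$ values $x(\lambda_i\Delta)$ are i.i.d.\ uniform on $[0,1]$, so each is the minimum with probability $1/n$ by exchangeability. Alternatively, uniformity follows from equivariance: $\Lambda$ acts transitively on $\Lambda/\Delta$ and preserves $\mu_\Delta$, so $\psi_*\mu_\Delta$ is a $\Lambda$-invariant probability measure on a transitive finite $\Lambda$-set, hence uniform.

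The only step needing any care is matching the left-shift convention so that equivariance comes out as $\psi(\lambda x)=\lambda\psi(x)$ rather than with an inverse. I would check this directly on the formula above. Everything else is routine, and finiteness of the index is used exactly to ensure the minimum exists.
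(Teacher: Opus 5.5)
Your proposal is correct and is essentially the paper's proof: on a $\Gamma$-invariant conull set, take the coset in $\Lambda/\Delta$ carrying the unique minimal coordinate, check $\Lambda$-equivariance against the left shift, and get uniformity from $\Lambda$-invariance of $\psi_*\mu_\Delta$ on the transitive finite set $\Lambda/\Delta$. Taking $X_0$ to be the set of configurations that are injective on $\Gamma/\Delta$ is a good choice, because it makes the $\Gamma$-invariance of $X_0$ (which the paper leaves implicit) immediate; your concrete definition of $\psi$ also supersedes the garbled first formulation with $\lambda^{-1}\cdot\lambda_i\Delta$.
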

\begin{proof}
    We treat $\Lambda/\Delta$ as a finite subset of $\Gamma/\Delta$. Since the Lebesgue measure on $[0,1]$ is non-atomic, there is a $\Gamma$-invariant conull set $X_0\subseteq X_\Delta$ for which every $x\in X_0$ has a unique minimum among the coordinates labeled by $\Lambda/\Delta$.

    Define now $\psi: X_0\to \Lambda/\Delta$ by letting
    \[
    \psi(x)=\lambda_x\Delta
    \]
    where $\lambda_x\Delta\in \Lambda/\Delta$ is the coordinate at which the minimum is attained.  It is direct to check that the map $\psi$ is measurable and $\Lambda$-equivariant. By $\Lambda$-equivariance, the push forward measure on $\Lambda/\Delta$ is $\Lambda$-invariant and thus uniform. 
\end{proof}

Recall that if $\Gamma \acts (X,\mu)$ and $\Gamma \acts (Y, \nu)$ are two pmp actions of $\Gamma$, we say that a $\Gamma$-equivariant map $\Psi:Y\rightarrow X$ is a \textbf{factor map} if $\Psi_*\nu=\mu$. We call $\Gamma \acts (X,\mu)$ a \textbf{factor} of $\Gamma \acts (Y,\nu)$ and $\Gamma \acts (Y, \nu)$ an \textbf{extension} of $\Gamma\acts (X, \mu)$.

\begin{obs}\label{obs:cosetProcessFactors}
    The conclusion of Proposition \ref{prop:finIndexCosetProcess} holds for any extension of a coset process associated to $\Delta$ by composing $\psi$ with the factor map. Note also that if $\Delta\in\mathcal D$, then the coset process associated to $\Delta$ is a factor of the coset process associated to $\mathcal D$, with the projection being the factor map.
\end{obs}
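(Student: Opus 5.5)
The plan is to verify the two assertions of Observation \ref{obs:cosetProcessFactors} separately, in each case reducing to facts already established in Proposition \ref{prop:finIndexCosetProcess}.

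\emph{First assertion.} Let $\Gamma\acts(Y,\nu)$ be an extension of the coset process $\Gamma\acts(X_\Delta,\mu_\Delta)$ with factor map $\Phi:Y\to X_\Delta$, which I take to be Borel. Let $\psi:X_0\to\Lambda/\Delta$ be the map from Proposition \ref{prop:finIndexCosetProcess}. Set $Y_0=\Phi^{-1}(X_0)$ and define $\psi'=\psi\circ\Phi:Y_0\to\Lambda/\Delta$. I then check four properties in turn.
\begin{enumerate}[label=(\roman*)]
\item $Y_0$ is Borel, as the preimage of a Borel set under a Borel map.
\item $Y_0$ is $\Gamma$-invariant. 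For $\gamma\in\Gamma$ and $y\in Y_0$, equivariance gives $\Phi(\gamma y)=\gamma\Phi(y)$, and $\gamma\Phi(y)\in X_0$ by the $\Gamma$-invariance of $X_0$.
\item $Y_0$ is conull, since $\nu(Y_0)=(\Phi_*\nu)(X_0)=\mu_\Delta(X_0)=1$.
\item $\psi'$ is Borel and $\Lambda$-equivariant, being a composition of a Borel $\Gamma$-equivariant map with a Borel $\Lambda$-equivariant map.
\end{enumerate}
Finally, the push-forward satisfies $\psi'_*\nu=\psi_*(\Phi_*\nu)=\psi_*\mu_\Delta$, which is uniform on $\Lambda/\Delta$ by Proposition \ref{prop:finIndexCosetProcess}. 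If $\Phi$ is only defined on an invariant conull set, I first restrict to that set; the argument is unchanged.

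\emph{Second assertion.} Let $\pi_\Delta:\prod_{\Delta'\in\mathcal D}X_{\Delta'}\to X_\Delta$ be the coordinate projection. I check three properties.
\begin{enumerate}[label=(\roman*)]
\item $\pi_\Delta$ is Borel, because it is a coordinate projection of a countable product of standard Borel spaces.
\item $\pi_\Delta$ is $\Gamma$-equivariant, because the action on the product is diagonal, so $\gamma$ acts on the $\Delta$-coordinate by the left shift on $X_\Delta$.
\item $(\pi_\Delta)_*\bigl(\bigotimes_{\Delta'}\mu_{\Delta'}\bigr)=\mu_\Delta$, since the marginal of a product measure on one factor is that factor's measure (each $\mu_{\Delta'}$ is a probability measure).
\end{enumerate}
Hence $\pi_\Delta$ is a factor map. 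Combined with the first assertion, this gives the conclusion of Proposition \ref{prop:finIndexCosetProcess} for the coset process associated to $\mathcal D$, and for any extension of it.

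There is no real obstacle here. The only points needing care are that $\Phi$ is Borel, which is part of the standing convention for factor maps, and that $\Phi^{-1}(X_0)$ remains invariant and conull; both follow directly from equivariance and from $\Phi_*\nu=\mu_\Delta$.
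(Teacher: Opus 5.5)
Your proposal is correct and is exactly the paper's argument, written out in more detail. The paper justifies the observation in one line each: compose $\psi$ with the factor map, and use the coordinate projection as the factor map. Your checks of invariance, conullness, equivariance and the push-forward measure fill in what the paper leaves implicit.
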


\subsection{Orders on groups}

In this section, we recall some of the basics of orders on groups. The material can be found in \cite{book:GOD}. Let $\Gamma$ be a group. A \textbf{left-order} on $\Gamma$ is a strict linear order $<$ which is left-invariant, that is for all $g,h,k \in \Gamma, g < h \Leftrightarrow kg < kh.$
We further say that $<$ is a \textbf{Conradian order} if it satisfies Conrad's property: for all $g,h >1$, there exists $n \in \N$ such that $g < hg^n$. An element $g \in \Gamma$ is \textbf{positive} if $g > 1$.

We will say a group is \textbf{left-orderable} if it admits a left-order, and a \textbf{left-ordered} group is a pair $(\Gamma, <)$ where $<$ is a left-order on the group $\Gamma$. We use similar terminology for Conradian orders. A homomorphism $\phi:\Gamma  \rightarrow \Delta$ between ordered groups $(\Gamma,<)$ and $(\Delta, <')$ is \textbf{order-preserving} if whenever $g < h$, we have that $\phi(g) <' \phi(h)$ or $\phi(g)=\phi(h)$.
If $\Lambda \le \Gamma$ is a subgroup of an ordered group $(\Gamma, <)$, then $\Lambda$ is an ordered group with the restricted order. If the order on $\Gamma$ is Conradian, then so is the restricted order on $\Lambda$.

\begin{prop}[{\cite[Corollary 3.2.30]{book:GOD}}]\label{prop:character}
    Suppose that $(\Gamma, <)$ is a finitely generated Conradian-ordered group. Then there is a unique (up to scaling) nontrivial order-preserving homomorphism $\psi:(\Gamma,<) \rightarrow \mathbf R$.
\end{prop}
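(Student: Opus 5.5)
The plan is the classical convex-jump argument: find the maximal proper convex subgroup $C$, show that $\Gamma/C$ is Archimedean, and then apply Hölder's theorem for both existence and uniqueness.

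\emph{Step 1: the subgroup $C$.} Recall that a subgroup $H\le\Gamma$ is \emph{convex} if $h_1<g<h_2$ with $h_1,h_2\in H$ forces $g\in H$. The convex subgroups of a left-ordered group are totally ordered by inclusion, and a union of a chain of convex subgroups is again convex. Let $S$ be a finite generating set and let $g=\max_{s\in S}\max(s,s^{-1})$. We may assume $\Gamma$ is nontrivial, so $g>1$. Let $C$ be the union of all convex subgroups not containing $g$. Then $C$ is convex and $g\notin C$. Any convex subgroup containing $g$ contains the interval $[g^{-1},g]$, hence all of $S$, hence is $\Gamma$. Therefore $C$ is the unique maximal proper convex subgroup.

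\emph{Step 2: $C$ is normal and $\Gamma/C$ is Archimedean.} This is the main obstacle, and it is exactly where Conrad's property enters. I would use the standard characterization (proved in \cite{book:GOD}): an order is Conradian if and only if, for every convex jump $D\subsetneq E$ (with no convex subgroup strictly between), $D$ is normal in $E$ and the induced order on $E/D$ is Archimedean. Apply this to the jump $C\subsetneq\Gamma$. It gives that $C$ is normal and that $\Gamma/C$, with the order inherited from $\Gamma$, is Archimedean.

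If I had to prove this directly, I would proceed as follows. Let $1<h\in\Gamma\setminus C$. Use $g<hg^n$ to show that the convex hull of $\{h^k\}$ is a convex subgroup containing $g$, and hence equals $\Gamma$; this gives the Archimedean property modulo $C$. For normality, observe that a conjugate $\gamma C\gamma^{-1}$ is again a proper convex subgroup, so it is contained in $C$.

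\emph{Step 3: existence.} By Hölder's theorem, the Archimedean left-ordered group $\Gamma/C$ is abelian. It embeds order-preservingly into $\RR$, and this embedding is unique up to positive scaling. Composing with the quotient map gives $\psi$. This map is order-preserving in the weak sense of the paper, and it is nontrivial because $\Gamma/C\neq 1$.

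\emph{Step 4: uniqueness.} Let $\psi'$ be any nontrivial order-preserving homomorphism. First, $\psi'$ kills $C$. Suppose instead that $1<c\in C$ with $\psi'(c)>0$. Since $g\notin C$ and $C$ is convex, we have $c^n<g$ for all $n$, so $n\psi'(c)\le\psi'(g)$ for all $n$, which is impossible. If instead $\psi'(c)<0$, then monotonicity forces $\psi'(c)\ge\psi'(1)=0$, a contradiction. Thus $\psi'$ factors through $\Gamma/C$ as an order-preserving map to $\RR$. Its kernel is a proper convex subgroup of the Archimedean group $\Gamma/C$, hence trivial. The uniqueness part of Hölder's theorem then shows that $\psi'$ is a positive multiple of $\psi$.
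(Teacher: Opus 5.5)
Your proposal is correct and follows essentially the same route as the paper, which defers to Deroin--Navas--Rivas: there the result is phrased for convex jumps (via Conrad's characterization and H\"older's theorem) and is applied to a finitely generated group through its maximal proper convex subgroup, exactly as in your Steps 1--3, and your Step 4 correctly shows that every nontrivial order-preserving homomorphism kills $C$. Two small cautions: do not rely on the optional ``direct'' aside in Step 2, since for a left order (even a Conradian one) conjugation is not order-preserving, so $\gamma C\gamma^{-1}$ need not be convex; that normality argument only works for bi-orders, and normality of $C$ really comes from the cited characterization. Similarly, in Step 1, for $s<1$ argue via $1<s^{-1}\le g$ and convexity, since $g^{-1}\le s$ need not hold in a left-ordered group.
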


In the cited reference, the proposition is phrased in terms of convex subgroup jumps. See the discussion at the bottom of page 121 in \cite{book:GOD} for how it applies in the Conradian-ordered case.

\begin{cor}\label{cor:fgsubgroupshavecharacters}
Suppose that $(\Gamma, <)$ is a Conradian-ordered group and that $P\Subset \Gamma$ is a finite set of positive elements. Then, there is a surjective homomorphism $\phi:\langle P\rangle \rightarrow \Z$ with $\phi(P)\ge 0$.
\end{cor}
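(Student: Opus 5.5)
The plan is to restrict the Conradian order to the finitely generated subgroup $\Lambda := \langle P\rangle$, apply Proposition~\ref{prop:character} there, and then discretize the resulting real-valued homomorphism into an integer-valued one without destroying the sign condition on $P$. First, if $P$ is empty (or $\langle P\rangle$ is trivial), there is nothing to prove beyond conventions, so assume $P\neq\emptyset$. Since the restriction of a Conradian order to a subgroup is Conradian, $(\Lambda,<)$ is a finitely generated Conradian-ordered group. It is nontrivial, because it contains a positive element. Proposition~\ref{prop:character} then gives a nontrivial order-preserving homomorphism $\psi:\Lambda\to\RR$.

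Order-preservation gives $\psi(p)\ge \psi(1)=0$ for every $p\in P$, since $p>1$. The image $\psi(\Lambda)$ is a finitely generated subgroup of $\RR$, hence free abelian of some finite rank $r\ge 1$, generated by the values $\psi(p)$, $p\in P$. If $r=1$, then $\psi(\Lambda)=c\Z$ for some $c>0$, and $\phi:=\psi/c$ is the desired surjection. Indeed, $\phi(P)\ge 0$ holds because $c>0$.

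In general $r$ may exceed $1$, and this discretization is the main step. We have $\psi(\Lambda)\cong\Z^r\subset\RR$, and $\psi$ corresponds to an injective linear functional $\ell:\RR^r\to\RR$ with $\ell(\Z^r)=\psi(\Lambda)$. Each $p\in P$ gives a vector $v_p\in\Z^r$ with $\ell(v_p)\ge 0$, and by injectivity of $\ell$ on $\Z^r$ we have $\ell(v_p)=0$ only when $v_p=0$. So the nonzero $v_p$ satisfy the strict inequalities $\ell(v_p)>0$. These are finitely many strict inequalities, which define an open condition on the functional. Hence we can perturb $\ell$ to a rational functional $\ell'$ that is still positive on all nonzero $v_p$. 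Clearing denominators makes $\ell'$ integral on $\Z^r$, and dividing by the gcd of its values makes $\ell'(\Z^r)=\Z$. The functional $\ell'$ is nonzero, since it is positive on some nonzero $v_p$; such a $v_p$ exists because $\psi$ is nontrivial and the $\psi(p)$ generate the image.

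Finally, set $\phi:=\ell'\circ\psi$, viewing $\psi$ as a map onto $\Z^r$. This $\phi$ is a surjective homomorphism $\Lambda\to\Z$. For every $p\in P$, either $v_p=0$, in which case $\phi(p)=0$, or $\phi(p)=\ell'(v_p)>0$. Thus $\phi(P)\ge 0$, as required. The only delicate point is the rational perturbation step; it works because the constraints are finitely many strict inequalities, while zero values are harmless.
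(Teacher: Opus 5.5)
Your proposal is correct and follows the paper's argument essentially step by step: restrict to $\langle P\rangle$, apply Proposition~\ref{prop:character}, identify the image with $\Z^r$, perturb the real functional to a rational one that stays positive on the finitely many nonzero vectors coming from $P$, and rescale to get a surjection onto $\Z$. Two small corrections: $\ell$ is injective only on $\Z^r$, not on $\RR^r$. And for $P=\emptyset$ the statement is false rather than vacuous, so $P\neq\emptyset$ is a genuine implicit hypothesis; the paper's proof and its applications assume this too.
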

\begin{proof}
    By Proposition \ref{prop:character}, there is a nontrivial order-preserving homomorphism $\psi': (\langle P\rangle,<)\to \mathbf R$. Since $P$ is finite, there is $k\in \mathbf N$ so that $\psi'(\langle P\rangle)\cong \mathbf Z^k$. By composing with that isomorphism, there is a surjective homomorphism $\psi: \langle P\rangle \to \Z^k$ and $v \in \RR^k$ so that $\langle \psi(g), v\rangle = \psi'(g) \geq 0$ for all $ g\in P$, with equality if and only if $\psi(g) = 0$. 

    Set $\varepsilon:=\min\{ \langle \psi(g), v \rangle\mid g\in P \text{ and } \psi(g)\not=0\}$. Note that the minimum exists and is greater than $0$ since $P$ is finite.  By continuity, there is $w \in \mathbf{Q}^k$ so that $\langle \psi(g), w \rangle \ge \frac{\varepsilon}{2}$ for all $g\in P$ with $\psi(g) \neq 0$. 
    Define a homomorphism $\phi: \langle P\rangle\to \mathbf Q$ by $g \mapsto\langle \psi(g), w\rangle$. Then $\phi(g) \geq 0$ for all $g \in P$.
    Since $P$ is finite, the image of $\phi$ is a finitely generated subgroup of $\mathbf{Q}$. In particular, the image is contained in a cyclic group, as required.
\end{proof}

A group $\Gamma$ is \textbf{locally indicable} if for every nontrivial finitely generated subgroup $\Lambda\le \Gamma$ there exists a surjective homomorphism $\phi: \Lambda\to \mathbf Z$.
Proposition \ref{prop:character} implies that Conradian-orderable groups are locally indicable as was established in \cite{article:Conrad}. The converse was first proven by \cite{article:brodski}, of which many proofs now exist \cite{book:Glass,article:RhemtullaRolfsen, article:Navas}.
\begin{thm}[\cite{article:Conrad, article:brodski}]\label{thm:locindicconrad}
    A group $\Gamma$ is locally indicable if and only if it is Conradian-orderable.
\end{thm}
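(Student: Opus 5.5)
One direction (Conradian-orderable implies locally indicable) is already recorded in the text just above. I would redo it as follows. Fix a Conradian order $<$ on $\Gamma$ and let $\Lambda\le\Gamma$ be a nontrivial finitely generated subgroup. Since the restriction of a Conradian order to a subgroup is again Conradian, $(\Lambda,<)$ is a finitely generated Conradian-ordered group. Proposition \ref{prop:character} then gives a nontrivial order-preserving homomorphism $\psi:\Lambda\to\RR$. Its image is a nontrivial finitely generated subgroup of $\RR$, hence isomorphic to $\Z^k$ with $k\ge1$, and projecting onto one coordinate produces a surjection $\Lambda\to\Z$. Corollary \ref{cor:fgsubgroupshavecharacters}, applied with $P$ the set of positive elements among the generators of $\Lambda$ and their inverses, gives the surjection directly.

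For the converse (locally indicable implies Conradian-orderable), I would follow the compactness argument of Rhemtulla--Rolfsen / Navas. The plan is to characterize Conradian orders through positive cones. A subset $P\subseteq\Gamma$ is the positive cone of a Conradian order exactly when the following hold. First, $\Gamma=P\sqcup P^{-1}\sqcup\{1\}$. Second, $PP\subseteq P$. Third, for all $g,h\in P$ we have $g^{-1}hg^2\in P$; this is the $n=1$ form of Conrad's condition, and it is known to suffice.

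The next step is to reduce to finite data by compactness. For each finite set $F\subseteq\Gamma\setminus\{1\}$, I would consider the set $C_F$ of subsets $P\subseteq\Gamma$, viewed as points of $\{0,1\}^\Gamma$, that satisfy the three axioms restricted to elements and products built from $F$ in a bounded way. Each $C_F$ is closed, and the family has the finite intersection property. If every $C_F$ is nonempty, then compactness of $\{0,1\}^\Gamma$ yields a global Conradian positive cone. So it suffices to solve, for each finite $F$, a "local" sign problem: choose signs $\epsilon_f\in\{\pm1\}$ on $F$ such that no finite product of the chosen elements $f^{\epsilon_f}$, together with the corresponding Conrad-type expressions, equals $1$.

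The main obstacle is this local problem, and I would solve it by the Burns--Hale style induction driven by local indicability. Take $H=\langle F\rangle$, which is nontrivial and finitely generated, and pick a surjection $\phi:H\to\Z$. Every $f$ with $\phi(f)\neq0$ gets the sign of $\phi(f)$. The elements with $\phi(f)=0$ lie in $\ker\phi$. I would replace them by a finite set of elements of $\ker\phi$ that still witnesses the finitely many relevant products, and recurse, using local indicability again on the finitely generated subgroup they generate. The difficulty is that this recursion need not terminate. To handle it, I would argue by contradiction: assume some finite $F$ admits no valid choice, and take a minimal bad configuration. Descending through the successive kernels produces a strictly descending chain of subgroups. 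Each such subgroup is generated by finitely many elements that are controlled by the original products, so the chain must stop at a nontrivial finitely generated subgroup, which maps onto $\Z$ by local indicability. That surjection gives a consistent sign choice, contradicting minimality. This is Brodskii's original strategy, and the bookkeeping in this step, which keeps the Conrad condition consistent across the levels of the recursion, is where I expect the real work to lie.
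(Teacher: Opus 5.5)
The paper does not prove this theorem. It cites Conrad and Brodskii (and the later proofs of Glass, Rhemtulla--Rolfsen and Navas), remarking only that the forward direction follows from Proposition \ref{prop:character}. Your forward direction is exactly that remark and is fine.

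One slip there: $g^{-1}hg^2\in P$ is the $n=2$ instance of the paper's condition $g<hg^n$, not $n=1$. The $n=1$ instance $g^{-1}hg\in P$ would force $P$ to be conjugation-invariant, i.e. a bi-order.

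Your compactness reduction for the converse is standard and sound, provided you note one thing. Let $S$ be the smallest set containing the chosen $f^{\epsilon_f}$ and closed under $(g,h)\mapsto gh$ and $(g,h)\mapsto g^{-1}hg^2$. Then $1\notin S$ forces $S\cap S^{-1}=\emptyset$, so $S$ extends to a full sign choice lying in $C_F$.

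The gap is that the local problem, which carries the entire content of the converse, is never solved. The obstacle you identify is not the real one. If you recurse on $F_2=F\cap\ker\phi_1$ itself, rather than on some replacement set of kernel elements, the recursion stops after at most $|F|$ steps. This is because a surjection $\phi_1:\langle F\rangle\to\Z$ cannot vanish on all of $F$. It is the same termination argument as in Proposition \ref{prop:quotientLocIndPositive}, and it produces a lexicographic structure like the paper's positive systems.

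The argument you propose for termination would not work anyway. Strictly descending chains of finitely generated subgroups need not stop ($\Z\supsetneq 2\Z\supsetneq 4\Z\supsetneq\cdots$), and being ``controlled by the original products'' gives no reason why yours should.

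What is missing is the verification that the signs produced by the recursion are valid, which you explicitly defer as ``the real work''. The needed idea is short: any homomorphism $\phi$ is additive for both operations generating $S$, since $\phi(gh)=\phi(g)+\phi(h)=\phi(g^{-1}hg^2)$.

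Set $F_1=F$ and $F_{i+1}=F_i\cap\ker\phi_i$, with $\phi_i:\langle F_i\rangle\to\Z$ surjective. Choose $\epsilon_f$ so that $\phi_i(f^{\epsilon_f})>0$ for $f\in F_i\setminus F_{i+1}$. Let $S_i\subseteq\langle F_i\rangle$ be the set generated as above by $\{f^{\epsilon_f}: f\in F_i\}$.

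Induction on how an element of $S_i$ is built shows that $\phi_i\ge 0$ on $S_i$ and that $S_i\cap\ker\phi_i\subseteq S_{i+1}$. This holds because a sum of two nonnegative integers vanishes only if both do. Hence $1\in S_1$ would give $1\in S_{L+1}=S(\emptyset)=\emptyset$.

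With this observation no minimal-counterexample argument or cross-level bookkeeping is needed. Without it, your proposal does not establish the converse.
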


\subsection{One-relator products}

In this section, we provide a quick summary of the theory of one-relator products, primarily following the work of Howie. We translate the results from the language of graphs of groups, in which most of the cited results are originally stated. 

In what follows, fix a generating set $S$ of a countable group $\Gamma$. A \textbf{word} in $S$ is a sequence $w = s_n\dots s_1$ of elements $s_i \in S^{\pm 1}$. A subword $v$ of $w$ is \textbf{proper} if it is not empty and not equal to $w$.

For a word $w$ in $S$, we abuse notation by identifying $w=s_n\dots s_1$ with the path $(1, s_1, s_{2}s_1, \dots, s_n\dots s_1)$ in the (left) Cayley graph.

We write $\overline w$ to denote the element of $\Gamma$ represented by $w$. A word is a \textbf{relation} of $\Gamma$ if $\overline w=1$. We denote by $\langle\!\langle \overline w\rangle\!\rangle\le \Gamma$ the normal closure of $\overline w$. 
A word is \textbf{embedded} if the associated path is injective (with the potential exception of endpoints). If the associated path starts and ends at the same group element, we say the word is an embedded relation. 
A word $w$ is a \textbf{proper power in $\Gamma$} if there exist a group element $g \in \Gamma$ and $m>1$ so that $\overline w=g^m$. Otherwise, we say $w$ is not a proper power (in $\Gamma$).

Given two countable groups $\Lambda, \Delta$ and a word $w$ in $\Lambda \cup \Delta$, we denote $\langle \Lambda, \Delta \mid w \rangle: = (\Lambda \ast \Delta)/\langle\!\langle \overline w \rangle\!\rangle$. We say it is a \textbf{one-relator product} if $\overline{w}$ is not conjugate in $\Lambda \ast \Delta$ to any element of $\Lambda$ or $\Delta$. This is equivalent to Howie's assumption regarding positive length of $w$, see the discussion at the top of page 447 in \cite{Howie:locInd}. 

We first mention a generalization of Magnus's Freiheitssatz \cite{Magnus} to one-relator products.
\begin{thm}[{\cite[Theorem 4.3]{Howie:Freiheitssatz}}]\label{thm:freih}
    Let $\Gamma = \langle \Lambda, \Delta \mid w \rangle$ be a one-relator product of locally indicable groups. Then, the canonical maps $\Lambda, \Delta \hookrightarrow \Gamma$ are injective.
\end{thm}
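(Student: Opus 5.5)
This is Howie's Freiheitssatz for one-relator products of locally indicable groups, and it is cited from \cite{Howie:Freiheitssatz}, so within this paper nothing needs to be proved. Still, here is how I would prove it if I had to reconstruct the argument. It is an induction on the length of $w$ using locally indicable quotients, in the spirit of Magnus's original proof.

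\textbf{Reductions.} First I reduce to the case where $\Lambda$ and $\Delta$ are finitely generated. Only finitely many letters occur in $w$, and $\Lambda$ and $\Delta$ are unions of finitely generated subgroups $\Lambda_0,\Delta_0$ containing those letters. Then $\Gamma$ is an amalgam of $\langle \Lambda_0,\Delta_0\mid w\rangle$ with the free factors, so injectivity for the finitely generated subproducts gives injectivity for $\Gamma$. Next I cyclically reduce $w$, so that it alternates between nontrivial letters of $\Lambda$ and of $\Delta$ and has positive length $2n$. I argue by induction on $n$, treating both factors simultaneously.

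\textbf{Inductive step.} Let $\Lambda_1\le\Lambda$ be the subgroup generated by the $\Lambda$-letters of $w$; it is finitely generated and nontrivial. Local indicability gives a surjection $\phi:\Lambda_1\to\Z$. Since $\Lambda_1$ is a free factor of $\Lambda$ relative to the amalgam, it suffices to treat $\langle\Lambda_1,\Delta\mid w\rangle$. Extend $\phi$ to $\Lambda_1\ast\Delta\to\Z$ by sending $\Delta$ to $0$. If $w\notin\ker\phi$, then $w$ survives in an abelian-type quotient; alternatively one can change coordinates by precomposing with an automorphism, or replace $\phi$ so that $\phi(w)=0$, using a Magnus-type trick: adjoin a root of a generator or work in $\Lambda_1\ast_{\ker}$. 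Suppose then $\phi(w)=0$. The kernel $K=\ker(\Lambda_1\to\Z)$ gives a description of $\ker(\langle\Lambda_1,\Delta\mid w\rangle\to\Z)$ as an infinite chain of amalgamated products. The pieces are the one-relator products $\langle K_i, \Delta^{t^i},\dots\mid w_i\rangle$, where $w_i$ is $w$ rewritten in conjugates $t^i\Delta t^{-i}$ and elements of $K$. The rewritten relator $w_i$ involves fewer factor-syllables of the relevant kind, because the spread of $\phi$-values of its letters is strictly smaller than the original length. Also $K$ and the factors are locally indicable, so the induction hypothesis applies to the consecutive pieces. Freiheitssatz for these pieces makes the amalgamations along the "middle" factors injective. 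Standard normal-form facts for amalgams then show that $\Delta$ and the relevant part of $\Lambda_1$ embed. Finally, $\Lambda_1$ itself embeds because $K$ embeds and the quotient $\Z$ is detected.

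\textbf{Main obstacle.} The delicate part is the bookkeeping in the rewriting step. I must choose the complexity measure (length of $w$ together with the $\phi$-spread) so that it strictly decreases. I must also handle the degenerate case where the rewritten relator lies in a single factor, which uses that $w$ is not conjugate into a factor. As a more conceptual alternative route, I would use Brodskii's theorem (Theorem \ref{thm:locindicconrad}): order $\Lambda\ast\Delta$ Conradian-ly and run Howie's "tower" argument on a reduced spherical diagram over $w$. A maximal or minimal vertex in the induced order on corners would contradict reducedness, which would show that no nontrivial element of a factor is killed.
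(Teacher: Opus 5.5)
The paper gives no proof of this statement; it is quoted from Howie, whose argument is topological (via towers). Your opening remark that nothing needs to be proved here is therefore exactly the paper's treatment. The reconstruction you add, however, would not stand as a proof, and the gaps are more than bookkeeping.

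\textbf{The case $\phi(w)\neq 0$ is left open.} This case really occurs: take $\Lambda_1$ and $\Delta_1$ both of first Betti number one, with neither the product of the $\Lambda$-letters nor that of the $\Delta$-letters killed by the essentially unique character. Magnus's trick does not transfer here.
\begin{itemize}
\item Adjoining a root $u=y^\beta$ to $\Lambda_1$ does not enlarge rational first cohomology. A character of $\Lambda_1*_{u=y^\beta}\langle y\rangle$ is determined by its restriction to $\Lambda_1$. So no new character appears that vanishes on $\Delta$ and on $w$.
\item In Magnus's proof, the exponent sum is killed by the substitution $b=xy^{-\alpha}$. This mixes the two factors, and is legitimate only because there $\Lambda*\Delta$ is free on $a,b$.
\end{itemize}
You would have to work with a character nontrivial on both factors, whose kernel in $\Lambda_1*\Delta_1$ contains an infinitely generated free factor (by Bass--Serre theory), and redo the rewriting there.

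\textbf{The chain of amalgams does not exist, even when $\phi(w)=0$.} Outside the free case, conjugation by $t$ acts on $K=\ker\phi$ as an automorphism, not by shifting disjoint free factors. So every shifted relator involves the same $K$; your $K_i$ are never defined. What does work is the following.
\begin{itemize}
\item Write $\langle\Lambda_1,\Delta\mid w\rangle$ as an HNN extension, with stable letter $t$, of the single one-relator product $P=\langle K*\Delta_{c_{\min}}*\cdots*\Delta_{c_{\max}-1},\ \Delta_{c_{\max}}\mid w_0\rangle$. Here $\Delta_c=t^c\Delta t^{-c}$, and the $c_j$ are the $\phi$-values of the prefixes of $w$.
\item Apply the induction hypothesis to this splitting and to the symmetric one. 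This makes the associated subgroups embed as free products.
\item The quantity that drops is the number of $\Delta_{c_{\max}}$-syllables. It is less than $n$ because $\phi$ is onto.
\end{itemize}

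\textbf{A smaller point on the reduction.} $\Lambda_1$ is not a free factor of $\Lambda$. The reduction goes through the pushout $\Lambda*_{\Lambda_1}\langle\Lambda_1,\Delta\mid w\rangle$, which is an honest amalgam once $\Lambda_1$ embeds.

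\textbf{The ordering alternative.} As written it is too vague to check. It also targets reduced spherical diagrams, whereas a failure of the Freiheitssatz is witnessed by a disc diagram whose boundary label lies in one factor.
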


We require the following special case of a result of Howie. This was first proved in the context of one-relator groups by Weinbaum in \cite{article:weinbaum}.
\begin{prop}[Essentially in {\cite{Howie:locInd}}]\label{prop:HowieSpelling}
    Let $\Lambda=\langle S\rangle,\Delta=\langle T\rangle$ be locally indicable groups. Let $\Gamma = \langle \Lambda, \Delta \mid w \rangle$ be a one-relator product where $w=w_k\dots w_1$ is a word in $S\cup T$ and $w_1$ and $w_k$ are not both in $S$ or both in $T$. Assume that $w$ defines an embedded path in $\Cay(\Lambda\ast \Delta, S\cup T)$. Then $w$ is an embedded relation in $\Cay(\Gamma, S\cup T)$.
\end{prop}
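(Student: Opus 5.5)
The plan is to reduce the statement to Howie's spelling-type theorem for one-relator products of locally indicable groups \cite{Howie:locInd}. That theorem generalizes Weinbaum's theorem: if $r$ is cyclically reduced in $\Lambda\ast\Delta$ of free-product length at least $2$, then no proper cyclic subword of $r$, with arbitrary partial syllables allowed at its two ends, represents the identity in $\langle \Lambda,\Delta\mid r\rangle$.

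To show $w$ is an embedded relation in $\Cay(\Gamma,S\cup T)$, first note $\overline w=1$ in $\Gamma$ by definition. It then suffices to show that for $0\le i<j\le k$ with $(i,j)\neq(0,k)$, the subword $v=w_j\dots w_{i+1}$ satisfies $\overline v\neq 1$ in $\Gamma$. Since the path of $w$ is embedded in $\Cay(\Lambda\ast\Delta,S\cup T)$, we already know $\overline v\neq 1$ in $\Lambda\ast\Delta$.

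First I group the letters of $w$ into maximal blocks lying in $S^{\pm1}$ or in $T^{\pm1}$. This yields the syllable form $w=x_n\cdots x_1$ in $\Lambda\ast\Delta$ with consecutive $x_l$ in alternating factors. No syllable is trivial, since each block is a proper-or-whole subpath of an embedded path and $w$ is not a closed path inside a single factor. The hypothesis that $w_1$ and $w_k$ lie in different factors says exactly that this syllable form is cyclically reduced. Its free-product length $n$ is at least $2$, consistent with the one-relator product assumption. The subword $v$ then has the form $y'\,x_{b}\cdots x_{a}\,y$, where $y,y'$ are (possibly empty) partial blocks coming from the syllables $x_{a-1}$ and $x_{b+1}$. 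I then split into cases.

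\emph{Case 1: $v$ lies inside a single block.} Then $\overline v$ is a nontrivial element of $\Lambda$ or of $\Delta$, and Theorem~\ref{thm:freih} shows it remains nontrivial in $\Gamma$.

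\emph{Case 2: $v$ meets at least two blocks.} Then $\overline v$ is represented by a reduced alternating product that is a proper cyclic subword of the cyclically reduced relator, possibly with truncated end syllables. Here I invoke Howie's theorem for one-relator products of locally indicable groups, which is the analogue of Weinbaum's result, to conclude $\overline v\neq 1$ in $\Gamma$. The truncated end syllables are still nontrivial elements of the factors by embeddedness in the free product, so the word stays reduced.

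The main obstacle is matching the hypotheses of Howie's result, which is phrased for graphs of groups and syllables, to our letter-level subwords with partial end syllables. I would handle this by absorbing the partial pieces into new syllables. Concretely, I would write $\overline v$ as a reduced word $y'x_b\cdots x_a y$ in $\Lambda\ast\Delta$ with $y,y'$ nontrivial factor elements. I would then apply Howie's result in its general form, which says a reduced word trivial in $\Gamma$ must contain a cyclic subword of $r^{\pm1}$ longer than allowed; equivalently, I would rerun the tower and induction argument on locally indicable extensions. If $\overline v=1$, this would force $v$ to contain nearly all of the syllables of $w$, contradicting that $v$ is a proper subword once the case of two overlapping partial end syllables is handled. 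That boundary case is where care is needed. In it, cyclically rotating $w$ (allowed, since rotation preserves both embeddedness and the relation) reduces to a shorter proper subword, and the argument finishes by induction on the length of $v$.
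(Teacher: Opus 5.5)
Your Case 1 is fine. Case 2, however, is the entire content of the proposition, and it is not proved. You invoke a principle that a reduced word trivial in $\Gamma$ must contain a cyclic subword of the relator ``longer than allowed.'' No statement of that kind with a useful threshold is available when the relator is not a proper power. A Newman-type ``more than half'' bound would give a Dehn algorithm, and it already fails for $\Z^2=\langle a,b\mid aba^{-1}b^{-1}\rangle$: for $n\ge 2$ the trivial word $a^nb^na^{-n}b^{-n}$ contains no three-letter subword of a cyclic permutation of the relator or its inverse.

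Even granting some such bound, the conclusion ``$v$ contains nearly all syllables of $w$'' does not contradict properness, because a proper subword may omit a single letter. That ``boundary case'' is exactly what has to be proved. Rotating $w$ does not shorten $v$, so the proposed induction on the length of $v$ has nothing to act on.

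The missing idea, which the paper uses, is to pass to the complementary subword.

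First show that every cyclic permutation of $w$ is still embedded in $\Cay(\Lambda\ast\Delta,S\cup T)$. This is not automatic, since $w$ is not a closed path in the free product, and it is where the hypothesis on $w_1,w_k$ enters. The rightmost syllable of $w_l\cdots w_1$ and the leftmost syllable of $w_k\cdots w_j$ are nontrivial, by embeddedness, and lie in different factors. Hence $w_l\cdots w_1w_k\cdots w_j$ is reduced and nontrivial.

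Then any proper subword $u$ is a proper terminal subword of some cyclic permutation $w'=vu$ with $v$ nonempty. Since $\overline{w'}=1$ in $\Gamma$, $\overline u=1$ forces $\overline v=1$. By Howie's Proposition 3.3 in \cite{Howie:locInd}, if both pieces of such a splitting of the relator are trivial in $\Gamma$, then one of them is empty (its case ii)), which is a contradiction.

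Because the splitting point may fall inside a syllable, this argument handles truncated end syllables uniformly. It needs no block-by-block case analysis and no spelling theorem.
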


Although it is very similar to  \cite[Corollary 3.4]{Howie:locInd}, Proposition \ref{prop:HowieSpelling} does not appear in this form in \cite{Howie:locInd} so we include a short derivation here.

\begin{proof}
    Because $w_1$ and $w_k$ come from different factors of the free product, they do not cancel and no word of the form $w_l\dots w_2w_1w_kw_{k-1}\dots w_j$ for $1 \leq l < j \leq k$ represents the identity in $\Lambda\ast \Delta$. Therefore, every cyclic permutation of $w$ 
    is embedded in $\Cay(\Lambda\ast \Delta, S\cup T)$, since subwords of a cyclic permutation are either subwords of $w$ or in the form above.

    We must verify that every proper subword $u$ of $w$ satisfies $\overline u\not=1$ in $\Gamma$. Any subword of $w$ is a terminal subword of a cyclic permutation of $w$, all of which are embedded by the above. Without loss of generality, we may thus assume $u=w_i\dots w_1$ is a proper terminal subword of $w$. Denote by $v=w_{k}\dots w_{i+1}$ the complementary subword so that $w=vu$. Assume towards a contradiction that $\overline{u} = 1$ in $\Gamma$ (and thus, $\overline{v} =1$ as well). Since $\overline u=1$ and $\overline v=1$, case ii) of \cite[Proposition 3.3]{Howie:locInd} must hold and one of $u$ or $v$ is the empty word, contradicting that $u$ is a proper subword. 
\end{proof}

The main result of \cite{Howie:locInd} establishes exactly the conditions under which a one-relator product of locally indicable groups remains locally indicable. 
\begin{thm}[{\cite[Theorem 4.2]{Howie:locInd}}]\label{thm:howielocindic}
    Let $\Gamma = \langle \Lambda, \Delta \mid w \rangle$ be a one-relator product of locally indicable groups $\Lambda, \Delta$. Then the following are equivalent:
    \begin{itemize}
        \item $\Gamma$ is locally indicable.
        \item $\Gamma$ is torsion-free.
        \item $w$ is not a proper power in $\Lambda \ast \Delta$.
    \end{itemize}
\end{thm}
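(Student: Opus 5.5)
The three conditions should be proved in the cycle (i)$\Rightarrow$(ii)$\Rightarrow$(iii)$\Rightarrow$(i). Only the last implication needs real work, and for it I would follow Howie's tower argument.

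\emph{(i)$\Rightarrow$(ii).} Suppose $g\in\Gamma$ is a nontrivial torsion element. Then $\langle g\rangle$ is a nontrivial finitely generated subgroup. It is finite, so it admits no surjection onto $\mathbf Z$, contradicting local indicability.

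\emph{(ii)$\Rightarrow$(iii).} I argue by contraposition. Suppose $\overline w=u^m$ in $\Lambda\ast\Delta$ with $m>1$; after conjugating we may take $u$ cyclically reduced. Since $\overline w$ is not conjugate into a factor, neither is $u$, and $u$ has strictly smaller positive syllable length than $w$. In $\Gamma$ we have $u^m=1$, so it suffices to show $u\neq 1$ in $\Gamma$. For this I would use the spelling-type statement \cite[Proposition 3.3]{Howie:locInd}, the same input used in Proposition \ref{prop:HowieSpelling}. It says that a word of positive length representing the identity in $\Gamma$ must be spelled out by cyclic subwords of the relator of essentially full length. A proper cyclic subword like $u$ is too short for this, so $\overline u\neq1$ and $\Gamma$ has torsion.

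\emph{(iii)$\Rightarrow$(i).} This is the main obstacle. I would proceed in three steps.
\begin{enumerate}
\item \emph{Reduction to finitely generated factors.} The word $w$ involves only finitely many letters, lying in finitely generated subgroups $\Lambda_0\le\Lambda$ and $\Delta_0\le\Delta$. Then $\Gamma$ is an iterated amalgam of $\Gamma_0=\langle\Lambda_0,\Delta_0\mid w\rangle$ with $\Lambda$ and $\Delta$. The amalgamation is along $\Lambda_0$ and $\Delta_0$, which embed into $\Gamma_0$ by Theorem \ref{thm:freih}. Amalgams of locally indicable groups over such subgroups are handled by Bass--Serre theory. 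A finitely generated subgroup $H$ acting on the tree either fixes a vertex, and so is locally indicable by hypothesis, or has a nontrivial quotient onto the fundamental group of the quotient graph, or splits nontrivially. In the splitting case Howie's lemma produces a map onto $\mathbf Z$ from the locally indicable vertex groups. So it suffices to treat $\Gamma_0$.
\item \emph{Building the tower and inducting on length.} I induct on the length of $w$. Since $\Lambda_0$ and $\Delta_0$ are finitely generated and locally indicable, they admit epimorphisms to $\mathbf Z$. Combining these gives a homomorphism $\Lambda_0\ast\Delta_0\to\mathbf Z$ killing $w$, or more generally a suitable covering, in the spirit of Magnus--Moldavanskii. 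The kernel in $\Gamma_0$ is then a graph of groups whose vertex groups are one-relator products with strictly shorter relators. These relators are still not proper powers and have locally indicable factors, so by induction the vertex groups are locally indicable and the Freiheitssatz applies to them.
\item \emph{Conclusion.} Let $H\le\Gamma_0$ be nontrivial and finitely generated. Either $H$ maps nontrivially to $\mathbf Z$ through $\Gamma_0\to\mathbf Z$, and we are done. Or $H$ lies in the kernel. In that case the Bass--Serre argument from step 1 gives a surjection of $H$ onto $\mathbf Z$, using that the edge groups embed via Theorem \ref{thm:freih}.
\end{enumerate}

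The delicate point is the base case where no such homomorphism to $\mathbf Z$ kills $w$. Here I would first pass to a larger one-relator product, adjoining roots of the relevant elements. This forces the exponent sum to vanish, as in Howie's original proof, and I expect this to be the hardest step.
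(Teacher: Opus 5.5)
The paper does not prove this statement. It imports it as \cite[Theorem 4.2]{Howie:locInd}, so your sketch has to stand on its own.

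Your (i)$\Rightarrow$(ii) is correct. Your (ii)$\Rightarrow$(iii) is fine if you use \cite[Proposition 3.3]{Howie:locInd} the way the paper does in Proposition \ref{prop:HowieSpelling}: write $w=u^{m-1}\cdot u$; if $\overline u=1$ in $\Gamma$ then both pieces are trivial, which forces one of them to be empty. Equivalently, $u^m$ is an embedded relation in $\Gamma$, which is exactly how the paper gets that $\overline w$ has order $m$. Your paraphrase of that proposition as a full spelling theorem is not what is used, and it is not needed.

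\textbf{The gap is in (iii)$\Rightarrow$(i).} Your Steps 1 and 3 both rest on one claim: a finitely generated subgroup of a graph of locally indicable groups whose edge groups embed maps onto $\mathbf Z$, via Bass--Serre theory plus an unspecified ``Howie's lemma.'' This is false in general. Burger and Mozes constructed finitely generated simple groups that are amalgamated products $F_n\ast_{F_m}F_n$ of free groups.

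The splitting case you pass over is where the entire difficulty lies. Suppose $H=H_1\ast_{H_e}H_2$. Epimorphisms $H_i\to\mathbf Z$ can be rescaled to agree on $H_e$ only when $H_e$ is cyclic. The edge groups produced by a Magnus--Moldavanskii rewriting are Magnus subgroups: free products of several factors, typically infinitely generated. So nothing glues.

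To close the induction you would need a strictly stronger, relative hypothesis. Roughly: a subgroup of the one-relator product that is finitely generated relative to its intersections with conjugates of a Magnus subgroup, and is not contained in such a conjugate, admits an epimorphism to $\mathbf Z$ vanishing on all those intersections. The hypothesis you actually carry (vertex groups locally indicable, plus the Freiheitssatz) does not give this. Providing this control is precisely what Howie's tower argument does, and your sketch does not replace it.

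\textbf{Two smaller points.}
\begin{itemize}
\item Step 1 needs no amalgam argument. $\Gamma$ is the directed union of the subgroups $\langle\Lambda_1,\Delta_1\mid w\rangle$, where $\Lambda_1\le\Lambda$ and $\Delta_1\le\Delta$ are finitely generated and contain the letters of $w$. These subgroups embed by Theorem \ref{thm:freih} and normal forms in $\Lambda\ast_{\Lambda_1}\langle\Lambda_1,\Delta_1\mid w\rangle\ast_{\Delta_1}\Delta$. Since local indicability is a local property, the reduction to finitely generated factors is immediate.
\item Your ``delicate base case'' is misdiagnosed. Take epimorphisms $\alpha:\Lambda_0\to\mathbf Z$ and $\beta:\Delta_0\to\mathbf Z$, and let $a,b$ be the corresponding exponent sums of $w$. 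The map equal to $b\alpha$ on $\Lambda_0$ and $-a\beta$ on $\Delta_0$ kills $w$ and is nontrivial; if $a=b=0$, use $\alpha$ and $\beta$ themselves. So such a homomorphism always exists. What you have not shown is that rewriting over its kernel produces relators that are shorter in some measure for which the induction runs.
\end{itemize}
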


\section{Positive systems and edge cutting}
In this section, we isolate positive systems, which will be the main tool in our approach. 
To motivate the definition, consider the case of an embedded relation $w$ in a left-ordered group $(\Gamma, <)$, where $\Gamma = \langle S \rangle$. For a given generator $s \in S$ appearing in $w$, assume that the $<$-minimum among the endpoints of edges labeled by $s$ appearing the path defined by $w$ is the identity, so that the other instances $P$ are all positive. We will attempt to cut most $s$-edges in a Schreier graph of a suitable action, witnessing the connectivity of the endpoints using the copy of the relation that lives ``above'' it with respect to the order induced on the orbits. The main issue is that the path above usually contains other $s$-edges which may or may not be cut.  In a general left-ordered group, there is no reason to expect the Schreier graph to remain connected. However, let us assume for a sketch of proof that there is an order-preserving map $\phi:\Gamma \rightarrow \Z$ such that every element of $P$ has positive image. In this way, the map $\phi$ gives a way to quantify the informal idea of ``advancing'' through the order. One can then keep only the $s$-edges whose endpoints lie in a sparse but thick collection of cosets of $\ker \phi$, for example in (a free extension of) the coset process associated to $\ker \phi$. One can convince themselves that this leaves the graph connected. 
In this sketch, the order is not strictly necessary, only the map $\phi$ is important. 

In a Conradian-ordered group, there may not exist a single map $\phi$ such that all of the elements of $P$ have positive image. However, there always exist $P$-positive systems, which play the role of $\phi$ in that they quantify ``advancement'' in the order.

\begin{definition}\label{def:posSystem}
    Let $\Gamma$ be a countable group and let $P \Subset \Gamma$ be a finite subset. A \textbf{$P$-positive system (on $\Gamma$)} is a sequence $\{P_i, \phi_i\}_{1\le i \leq L}$ such that:
    \begin{enumerate}
        \item $P_1 = P$ and $P_{i+1} = P \cap \ker\phi_i$ for $1\le i<L$,
        \item $P\cap \ker \phi_L = \emptyset$,
        \item $\phi_i: \langle P_i \rangle \rightarrow \Z$ is a surjective group homomorphism with $\phi_i(g) \geq 0$ for each $g \in P_i$ and each $1\le i \leq L$.
    \end{enumerate}
    The \textbf{width} of the system is the maximum value of $\phi_i(g)$ taken over $g\in P_i$ and $1\le i\le L$.
\end{definition}

For a given $P$, there may exist positive systems even if the group itself is not Conradian-orderable, such as when it admits a Conradian-orderable quotient.
\begin{prop}\label{prop:quotientLocIndPositive}
    Let $\Gamma$ be a countable group with a group homomorphism $\pi:\Gamma\to \Gamma'$ to a Conradian-ordered group $(\Gamma', <)$. Let $P\Subset \Gamma$ be a finite set such that every element of $\pi(P)$ is positive. 
    Then there exists a $P$-positive system on $\Gamma$.
\end{prop}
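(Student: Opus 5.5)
We build the system inductively in $\Gamma'$ using Corollary \ref{cor:fgsubgroupshavecharacters}, and pull the homomorphisms back along $\pi$. We may assume $P\neq\emptyset$; otherwise we take $L=1$ and $\phi_1$ the trivial map on $\langle\emptyset\rangle=\{1\}$, with the surjectivity requirement read vacuously. This degenerate case is purely conventional.

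Set $P_1=P$. Suppose $P_i\subseteq P$ is nonempty and $\pi(P_i)$ consists of positive elements. Corollary \ref{cor:fgsubgroupshavecharacters}, applied to the finite positive set $\pi(P_i)\Subset\Gamma'$, gives a surjective homomorphism $\phi_i':\langle\pi(P_i)\rangle\to\Z$ with $\phi_i'(\pi(P_i))\ge 0$. Since $\pi$ maps $\langle P_i\rangle$ onto $\langle \pi(P_i)\rangle$, the composite $\phi_i:=\phi_i'\circ\pi|_{\langle P_i\rangle}:\langle P_i\rangle\to\Z$ is surjective and satisfies $\phi_i(g)\ge0$ for all $g\in P_i$. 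Define $P_{i+1}:=P\cap\ker\phi_i$. If $P_{i+1}=\emptyset$, stop with $L=i$. Otherwise $P_{i+1}$ is again a finite set whose image under $\pi$ is positive, so the step repeats.

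The one subtlety is that $\phi_i$ is defined only on $\langle P_i\rangle$. The intersection $P\cap\ker\phi_i$ should therefore be read as $\{g\in P\cap\langle P_i\rangle:\phi_i(g)=0\}$. To make the recursion well defined, we check that $P_{i+1}\subseteq P_i$, so that $\langle P_{i+1}\rangle\le\langle P_i\rangle$ and the induction stays inside the domain. We will state this reading explicitly. With it, condition (1) of Definition \ref{def:posSystem} holds by construction, condition (3) holds by the previous paragraph, and condition (2) is the stopping rule.

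Termination is the key point. We claim $P_{i+1}\subsetneq P_i$. Because $\phi_i$ is surjective onto $\Z$ and $\langle P_i\rangle$ is generated by $P_i$, some $g\in P_i$ has $\phi_i(g)\neq0$. That element lies in $P_i\setminus P_{i+1}$. Since $P$ is finite, the strictly decreasing chain $P_1\supsetneq P_2\supsetneq\cdots$ must reach $\emptyset$ after at most $|P|$ steps, which gives $L\le|P|$.

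The remaining care is only bookkeeping about domains. I do not expect any step to be a genuine obstacle, since the substantive input is Corollary \ref{cor:fgsubgroupshavecharacters}.
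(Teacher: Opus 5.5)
Your proposal is correct and follows the paper's proof essentially verbatim: compose the character from Corollary \ref{cor:fgsubgroupshavecharacters} on $\langle \pi(P_i)\rangle$ with $\pi$, set $P_{i+1}=P\cap\ker\phi_i$, and get termination because surjectivity forces some $p\in P_i$ with $\phi_i(p)\neq 0$. You promise the inclusion $P_{i+1}\subseteq P_i$ but never verify it (the paper also only asserts it); it holds because $P\cap\langle P_i\rangle=P_i$, which is immediate from $\langle P_i\rangle\le\ker\phi_{i-1}$ (a subgroup containing $P_i$), so $P_{i+1}\subseteq P\cap\langle P_i\rangle=P_i$.
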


\begin{proof}
    We construct a $P$-positive system recursively.
    Set $P_1=P$. 
    
    Suppose $P_i\not=\emptyset$ is already defined. We define $\phi_i: \langle P_i \rangle \to \mathbf Z$. The order $<$ is Conradian on $\langle \pi(P_i)\rangle\subseteq \Gamma'$ and since $P_i$ is finite, by Corollary \ref{cor:fgsubgroupshavecharacters} there exists a surjective homomorphism $\chi_i: \langle \pi(P_i)\rangle\to \mathbf Z$ with $\chi_i(\pi(g))\ge 0$ for $g\in P_i$. Set $\phi_i:=\chi_i\circ \pi$.

    If $P\cap \ker \phi_i=\emptyset$ the process terminates. Otherwise, set $P_{i+1}=P\cap \ker \phi_i\subset P_i$ and note that $P_{i+1}$ is strictly smaller than $P_i$ since there must exist a $p\in P_i$ with $\chi_i(\pi(p))>0$. Thus the process terminates at some stage $L\le|P|$ and the system $\{P_i, \phi_i\}_{1\le i \leq L}$ is a $P$-positive system on $\Gamma$.
\end{proof}

\begin{lemma}\label{lem:disamb-induction}
    Let $\Gamma\acts X$ be an action of a countable group $\Gamma$. Let $P \Subset \Gamma$ be a finite subset, and let $\{P_i, \phi_i \}_{1\le i \leq L}$ be a $P$-positive system of width $M$. Assume that for some $n \in \N$ and for all $1 \leq i \leq L$, there are $P_i$-equivariant maps $\psi_{i,n}: X \rightarrow \Z/n\Z$, i.e., $\psi_{i,n}(g.x)=\psi_{i,n}(x)+\phi_i(g)$ for $g\in P_i$.
    
    Then, for $x \in X$ and for every sequence $\{g_j\}_{1\le j\leq n^L}$ of elements in $P$, there exist $1\le j \leq n^L$ and $1\le i\leq L$ such that 
    \[\psi_{i,n}(g_j\cdots g_1.x) \in \{0,1,2,...,M-1\}.\]
\end{lemma}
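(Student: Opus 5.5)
The plan is to induct downward on the level $i$ of the positive system. For $1\le i\le L$ I will prove the following claim $(C_i)$: for every $y\in X$ and every sequence $h_1,\dots,h_{n^{L-i+1}}$ of elements of $P_i$, there exist $1\le j\le n^{L-i+1}$ and some $i'\ge i$ with
\[
\psi_{i',n}(h_j\cdots h_1.y)\in\{0,\dots,M-1\}.
\]
Since $P_1=P$, the case $(C_1)$ is exactly the lemma. If $M\ge n$, the set $\{0,\dots,M-1\}$ covers all of $\Z/n\Z$ and there is nothing to prove, so I assume $M<n$ throughout.

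The key observation is a \emph{no-skipping} fact. Fix $i$ and a sequence $h_1,h_2,\dots$ of elements of $P_i$. Put $y_0=y$ and $y_j=h_j.y_{j-1}$. By $P_i$-equivariance,
\[
\psi_{i,n}(y_j)=\psi_{i,n}(y_{j-1})+\phi_i(h_j),
\]
and every increment satisfies $0\le \phi_i(h_j)\le M$ by the definition of width. Lift $\psi_{i,n}(y_0)$ to an integer $a\in[0,n)$ and accumulate the increments over $\Z$. Suppose at least $n$ of the increments are strictly positive. Then the lifted value eventually reaches at least $a+n\ge n$. At the first index $j\ge1$ where it is $\ge n$, the previous value is $<n$, so the new value lies in $[n,n+M-1]$. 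Reducing mod $n$ gives $\psi_{i,n}(y_j)\in\{0,\dots,M-1\}$.

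For the base case $i=L$: since $P\cap\ker\phi_L=\emptyset$, every element of $P_L$ has strictly positive $\phi_L$-value. Hence any $n$ steps from $P_L$ are all strictly positive, and the no-skipping fact gives a hit at level $L$ within $n=n^{L-L+1}$ steps.

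For the inductive step, assume $(C_{i+1})$ and take a sequence of length $n^{L-i+1}$ from $P_i$. Split it into $n$ consecutive blocks, each of length $n^{L-i}$. There are two cases.
\begin{itemize}
\item Some block lies entirely in $\ker\phi_i$. Its entries are in $P\cap\ker\phi_i=P_{i+1}$. Apply $(C_{i+1})$ to this block, starting from the point $y$ moved by all the preceding elements. This yields a hit at some level $\ge i+1$, at an index inside the block, hence at a global index between $1$ and $n^{L-i+1}$.
\item Every block contains an element with $\phi_i>0$. All entries lie in $P_i$, so $\psi_{i,n}$ is tracked along the whole sequence. There are at least $n$ strictly positive increments, and the no-skipping fact gives a hit at level $i$.
\end{itemize}

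The main point needing care is that $\psi_{i,n}$ is only $P_i$-equivariant, not $P$-equivariant. This is why the induction must be set up on sequences drawn from $P_i$ rather than from $P$. With that setup, the level-$i$ counter is legitimately tracked throughout the level-$i$ argument, and the recursion into $P_{i+1}$ only ever uses elements of $\ker\phi_i$. Also, the hit index $j$ is always at least $1$, as required, because the first crossing occurs strictly after a step.
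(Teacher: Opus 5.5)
Your proof is correct and follows essentially the same route as the paper. Both arguments run a recursive pigeonhole over the levels of the positive system to find a stretch of elements of some $P_i$ containing at least $n$ strictly positive $\phi_i$-increments. Both then observe that a counter in $\Z/n\Z$ whose increments lie in $[0,M]$ cannot skip the window $\{0,\dots,M-1\}$.

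The only difference is cosmetic. You split the sequence at each level into $n$ fixed blocks of length $n^{L-i}$. The paper instead cuts the sequence at the positions of elements of $P_i - P_{i+1}$, and shows that a sequence avoiding the good configuration has length at most $n^L-1$.
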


\begin{proof}
Define $J_i=\{1\le j\le n^L\mid g_j \in P_i- P_{i+1}\}$ where we take $P_{L+1}=\emptyset$. 
By the pigeonhole principle, in any sequence of length $n^L$, there must exist $1\le i\le L$ and an interval $1\le a\le b\le n^L$ such that $|J_i\cap \{a,a+1,\dots, b\}|\ge n$ and $g_j\in P_i$ for every $a\le j\le b$. Indeed, if no such $i$ and interval exist, then $|J_1| \leq n-1$. The elements of $J_1$ cut the sequence into at most $n$ intervals. Inside each interval there must be at most $n-1$ elements of $J_2$, cutting each interval into at most $n$ subintervals. Continuing in this fashion the total length of the sequence must be at most $(n-1)+(n-1)n+(n-1)n^2+\cdots +(n-1)n^{L-1}=(n-1)(1+n+\cdots+n^{L-1})=n^L-1$.

Fix $1\le i\le L$ and interval $1\le a<b\le n^L$ as above, so that $g_j\in P_i$ and $0\le \phi_{i}(g_j)\le M$ for all $a\le j\le b$, and there exist at least $n$ many $a\le j\le b$ so that $\phi_{i}(g_j)$ is nontrivial.  
Fix $x\in X$ and let $y_j=g_j\cdots g_1.x$ for $1\le j\le n^L$ with $y_0=x$. Since $\psi_{i,n}(y_j)=\psi_{i,n}(y_{j-1})+\phi_i(g_j)$, we have that the sequence $(\psi_{i,n}(y_j))_{a\le j\le b}$ must intersect $\{0,\dots, M-1\}$, as required.
\end{proof}

If $p=(x_1, x_2, \dots, x_n)$ is a path in a graph, we say an oriented edge $(y,z)$ is \textbf{contained} in the path $p$, denoted $(y,z)\in p$, if there exists a $1\le i\le n-1$ so that $x_i=y$ and $x_{i+1}=z$. For an edge $(y,z)$ in a graph, we write $(y,z)^{-1}:=(z,y)$. We extend this notation to paths, for example $(x,y,z)^{-1} = (z,y,x)$.

\begin{thm}\label{thm:cutRelator}
    Let $\Gamma = \langle S \rangle$ be a countable group, $w^m$ be an embedded relation and $s \in S$ be a generator that appears in $w^m$. Let $Q = \{g \in \Gamma : (g, sg) \text{ or } (sg,g) \in w^m\}$. Suppose that $\{1, \overline w, ..., \overline w^{m-1}\}\subseteq Q$ and let $P=Q-\{1, \overline w, ..., \overline w^{m-1}\}$.
    Assume that there exists a $P$-positive system. 
    
    Then there exists a free pmp action $\Gamma\acts (X,\mu)$ such that for all $\varepsilon > 0$, there is a Borel set $A_\varepsilon\subseteq X$ with $\mu(A_\varepsilon)\ge \frac 1m -\varepsilon$ such that 
    $\mathcal{G}_\varepsilon := \Sch(\Gamma \acts X, S) - \{(s.x,x)^{\pm1}: x \in A_\varepsilon\}$
    is a graphing of $\Gamma\acts (X,\mu)$. 
\end{thm}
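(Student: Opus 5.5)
The plan is to build the action as a product of coset processes, one for each auxiliary equivariant map we need, together with a Bernoulli shift to make the action free. Write $\Lambda_i=\langle P_i\rangle$. For each $1\le i\le L$ and $n\ge 1$ let $\Delta_{i,n}=\phi_i^{-1}(n\Z)\le \Lambda_i$, which has index $n$ in $\Lambda_i$ with $\Lambda_i/\Delta_{i,n}\cong \Z/n\Z$ via $\phi_i$. Since $w^m$ is an embedded relation, $\langle \overline w\rangle$ is cyclic of order exactly $m$, and we use the trivial subgroup $\{1\}\le\langle\overline w\rangle$.

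Let $\Gamma\acts(X,\mu)$ be the diagonal action on the Bernoulli shift $[0,1]^\Gamma$ times the coset process associated to $\mathcal D=\{\{1\}\}\cup\{\Delta_{i,n}\}_{i,n}$. This action is free and pmp. By Proposition \ref{prop:finIndexCosetProcess} and Observation \ref{obs:cosetProcessFactors}, on a $\Gamma$-invariant conull set we obtain:
\begin{itemize}
\item a map $\theta:X\to \Z/m\Z$ with $\theta(\overline w.x)=\theta(x)+1$ and uniform pushforward;
\item maps $\psi_{i,n}:X\to\Z/n\Z$ with $\psi_{i,n}(g.x)=\psi_{i,n}(x)+\phi_i(g)$ for $g\in P_i$, each with uniform pushforward.
\end{itemize}
These are exactly the hypotheses of Lemma \ref{lem:disamb-induction}, uniformly in $n$.

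Given $\varepsilon$, fix $n$ with $LM/n<\varepsilon$, where $M$ is the width. Set
\[
A_\varepsilon=\{x: \theta(x)=0 \text{ and } \psi_{i,n}(x)\notin\{0,\dots,M-1\}\text{ for all } i\}.
\]
A union bound gives $\mu(A_\varepsilon)\ge \frac1m - LM/n\ge \frac1m-\varepsilon$. Identify the $s$-edge $(x,s.x)$ with its base point $x$. For $y\in A_\varepsilon$, translate the loop $w^m$ to start at $y$; this is the path $(g.y)$ read along $w^m$. Its $s$-edges are based (up to orientation, which is why $Q$ includes both orientations) at the points $q.y$ with $q\in Q$. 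These are of two kinds:
\begin{itemize}
\item the points $\overline w^k.y$ with $k\ne 0$, which satisfy $\theta(\overline w^k.y)=k\neq 0$ and hence are not in $A_\varepsilon$;
\item the points $g.y$ with $g\in P$.
\end{itemize}
So the endpoints of the cut edge at $y$ are joined in $\mathcal G_\varepsilon$, provided each edge based at $g.y$ with $g\in P$ lies in $\mathcal G_\varepsilon$, or is itself shown to be connected through $\mathcal G_\varepsilon$.

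We prove connectivity by recursion on these witnesses. If the edge at $g_1.y$ is also cut, we repeat the argument at $g_1.y$, which produces $g_2g_1.y$, and so on. This builds a finitely branching tree of cut edges whose nodes are $g_j\cdots g_1.y$ with $g_j\in P$. Every node beyond the root lies in $A_\varepsilon$, so all $\psi_{i,n}$-values there avoid $\{0,\dots,M-1\}$. Lemma \ref{lem:disamb-induction}, applied at $x=y$, forbids any branch of length $n^L$. Hence the tree has depth less than $n^L$, and backwards induction on depth shows that every cut edge, in particular the one at $y$, has its endpoints connected in $\mathcal G_\varepsilon$. Therefore $\mathcal G_\varepsilon$ has the same connected components as the Schreier graph, namely the $\Gamma$-orbits, on a conull set, so it is a graphing.

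The main obstacle is the bookkeeping in the recursive step. We must check that the loop translated to $g_1.y$ again has its non-$P$ $s$-edges at $\overline w^k g_1.y$. This holds because the loop is translated by the action, so the same set $Q$ applies at each base point. We must also check that edge orientations $(g,sg)$ versus $(sg,g)$ do not break the identification of edges with base points. One more caveat is the hypothesis that $\{1,\overline w,\dots,\overline w^{m-1}\}\subseteq Q$ marks distinct edges from $P$. Embeddedness of $w^m$ should handle this, but it must be used carefully so that the $\theta$-guaranteed edges genuinely avoid $A_\varepsilon$.
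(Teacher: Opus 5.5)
Your proposal is correct and is essentially the paper's proof: a free extension of the coset process for the $\Delta_{i,n}$, a $\langle\overline w\rangle$-transversal of measure $\frac1m$ (you realize it as the zero set of your map $\theta$ from the trivial-subgroup coset process, where the paper takes an arbitrary Borel transversal $C$), removal of the sets $\psi_{i,n}^{-1}(\{0,\dots,M-1\})$, and replacement of each cut $s$-edge by the rest of the translated loop $w^m$, whose other cut edges lie at points $g.y$ with $g\in P$. Your finite tree of cut edges with backward induction on depth is the paper's $N^L$-fold iteration of its replacement operator together with Claim \ref{claim:mainthm}, and in both versions Lemma \ref{lem:disamb-induction} bounds the depth.
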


\begin{proof}
    Let  $\{P_i, \phi_i\}_{i \leq L}$ be a $P$-positive system. For $n \in \N$, the group $\Delta_{i,n} := \phi_i^{-1}(n\Z)$ has finite index in $\langle P_i\rangle$. Let $\Gamma \acts (X,\mu)$ be a free extension of the $\Gamma$-coset process associated to \[\{\Delta_{i,n}:{1\le i\le L} \text{ and } n\in \N\}.\]
    Recall from Proposition \ref{prop:finIndexCosetProcess} and Observation \ref{obs:cosetProcessFactors} that on $\Gamma$-invariant conull sets $X_{i,n}\subseteq X$, there exist $P_i$-equivariant Borel maps $\psi_{i,n}: X_{i,n} \rightarrow \langle P_i \rangle/\Delta_{i,n} \cong \Z/n\Z$. The set $X':=\bigcap_{i,n}X_{i,n}$ remains $\Gamma$-invariant and conull. Thus the action $\Gamma \acts X'$ and maps $\psi_{i,n}$ satisfy the hypotheses of Lemma \ref{lem:disamb-induction}. 
    
    Observe that $\overline w$ has order $m$. Indeed, $\overline w^m=1$ and for $1\le k < m$, $\overline w^k\not=1$ since $w^m$ is embedded by hypothesis. Let $C\subseteq X'$ be a Borel transversal for the restricted action $\langle \overline{w}\rangle \acts X$. Then $\mu(C)=\frac 1m$ since the action is free and $\overline{w}$ has order $m$.

    Fix $\varepsilon >0$ and let $N=\lceil LM\varepsilon^{-1}\rceil$, where $M$ is the width of $\{P_i, \phi_i\}_{i\le L}$. Define 
    \[
    A_\varepsilon=C-\bigcup_{1\le i\le L} \psi_{i,N}^{-1}(\{0,\dots, M-1\})
    \]
    and observe $\mu(A_\varepsilon)\ge\mu(C)-\frac{LM}{N}\ge \frac{1}{m} -\varepsilon$.
    
    Since $1\in Q$, after possibly inverting $w^m$, we may assume that $w^m= w's$. After removing $(1,s)$ from the cycle labeled by $w^m$, the remaining edges form a path $\rho$ from $s$ back to $1$ that is labeled by $w'$.

    For a path $p = (g, s_0g, s_1s_0g,..., s_k...s_0g)$ in $\Cay(\Gamma, S)$ and an $x\in X$, define $\hat p^x$ to be the path in $\Sch(\Gamma\acts X,S)$ given by
    \[
    \hat p^x=(g.x,s_0g.x,s_1s_0g.x,\dots, s_k\dots s_0g.x).
    \]
    Given $x \in A_\varepsilon$, we wish to witness the fact that the endpoints of a removed edge $e_x = (s.x,x)$ are still connected in $\mathcal{G}_\varepsilon$. Towards that end, define a ``replacement operator'' 
    \[
    \theta(e_x) = \hat \rho^x.
    \]
    Since $\rho$ is a path from $s$ to $1$,  $\theta(e_x)$ is a path from $s.x$ to $x$ and so has the same endpoints as $e_x$. For the reverse orientation, define $\theta (e_x^{-1})=(\hat\rho^x)^{-1}$.
    If $e\in \G_\varepsilon$ and thus not of the form $e_x^{\pm 1}$ for $x\in A_\varepsilon$, then define $\theta(e)=e$. Extend $\theta$ to all paths in $\Sch(\Gamma\acts X,S)$ by concatenation. For $r \geq 0$, denote $\theta^r$ to be the $r$-fold application of $\theta$, where $\theta^0 = \id$.
    
    Suppose that an edge $e_y^\delta$ occurs in $\theta(e_x)$ where $x,y\in A_\varepsilon$ and $\delta\in \{\pm 1\}$. Observe in this case that $y=g.x$ for some $g\in P$. Indeed, it is clear that $g\in Q-\{1\}$, since $e_y^\delta \in \hat\rho^x$ and thus $(sg, g)^\delta$ is contained in $\rho \subset w^m$. Note that $\overline{w}^k.A_\varepsilon \cap A_\varepsilon = \emptyset$ for $1\le k\le m-1$ since $C$ is a fundamental domain for $\langle\overline w \rangle$ and $A_\varepsilon \subset C$. Therefore, $g\not\in\langle \overline w\rangle$ and so $g\in P$. 
    
    \begin{claim}\label{claim:mainthm}
    Let $x\in A_\varepsilon$ and $r\ge 0$. Suppose that there exist a $y\in A_\varepsilon$ and $\delta\in \{\pm 1\}$ such that $e_y^\delta\in \theta^r(e_x)$.  Then there exist $g_1, \dots, g_{r} \in P$ such that $y = g_{r}\dots g_1.x$ and $g_{j}\dots g_1.x \in  A_\varepsilon$ for all $0\le j \leq r$.
    \end{claim}

    \begin{proof}[Proof of claim]
        We argue by induction on $r$. For the base case $r = 0$, $\theta^0(e_x)=e_x$ so the claim holds with the empty sequence. 
        
        Assume the claim holds for $r\ge 0$ and consider $e \in \theta^{r+1}(e_x)$ with $e=e_y^\delta$ for some $y\in A_\varepsilon$ and $\delta\in\{\pm 1\}$. 
        Then there is an edge $e'\in \theta^r(e_x)$ with $e\in\theta(e')$. 
        The edge $e'$ must also be of the form $e_{y'}^{\delta'}$ for some $y'\in A_\varepsilon$ and $\delta'\in\{\pm 1\}$ as otherwise $\theta(e')=e'$. 
        By the observation above, there exists a $g_{r+1}\in P$ so that $g_{r+1}.y'=y$.

        By the induction hypothesis, there exist $g_1, \dots, g_{r}\in P$ satisfying $y' = g_{r}\dots g_1.x$ and $g_{j}\dots g_1.x \in A_\varepsilon$ for all $0\le j \leq r$. The sequence $g_1,\dots, g_{r+1}$ now satisfies the claim. 
    \end{proof}
    By Lemma \ref{lem:disamb-induction} and Claim \ref{claim:mainthm}, it must be that $\theta^{N^L}(e_x) \subset \mathcal{G}_\varepsilon$. Therefore, $s.x$ and $x$ remain connected in $\mathcal G_\varepsilon$  for all $x\in X'$, and thus on a conull set the connected components are the same as those of $\Sch(\Gamma\acts X,S)$ finishing the proof.
\end{proof}

\begin{remark}\label{rmk:nofixedprice}
    A standard strategy for proving fixed price involves computing cost for an action which is weakly contained in every free pmp action, such as the Bernoulli shift \cite{AbertWeiss}, and using monotonicity of cost under weak containment \cite[Corollary 10.14]{book:GlobalAspects} to establish a global upper bound. 
    
    Our techniques do not compute an upper bound on the cost of the Bernoulli shift. 
    Indeed, the action $\alpha$ constructed in Theorem \ref{thm:cutRelator} is not weakly contained in the Bernoulli shift $\beta$ of $\Gamma$ if $\Lambda=\langle P\rangle$ is nonamenable. 
    Suppose toward a contradiction that $\alpha$ is weakly contained in $\beta$. 
    The construction gives that $\alpha|_\Lambda$ has a nontrivial finite factor $\Lambda\acts \Lambda/\Delta$ where $\Delta=\phi_1^{-1}(n\mathbf Z)$ for any $n\ge2$. In particular, the action $\alpha|_\Delta$ is not ergodic.
    Since $\Lambda$ is nonamenable, $\Delta$ is nonamenable and thus the Bernoulli shift $\beta|_\Delta$ is strongly ergodic. This contradicts that $\alpha|_\Delta$ is weakly contained in $\beta|_\Delta$.
\end{remark}

\section{Applications to cost of groups}

The novelty in the following theorem is the case when $\Gamma$ is a one-relator product. For convenience, we also include in the statement the case where $\Gamma\cong \Lambda\ast \mathbf Z/m\mathbf Z$. 

\begin{thm}\label{thm:oneRelatorProduct}
    Let $\Gamma=\langle \Lambda, \mathbf Z \mid w^m\rangle$ where $\Lambda$ is locally indicable and $m\ge 1$. Assume $w$ is not a proper power and that $\overline{w}$ is not conjugate to an element of $\Lambda$. 
    Then 
    \[
    \cost(\Gamma)\le \cost(\Lambda)+1-\frac{1}{m}.
    \]

    Moreover, if $\beta_1^{(2)}(\Lambda)=\cost(\Lambda)-1$, then $\beta_1^{(2)}(\Gamma)=\cost(\Gamma)-1=\cost(\Lambda)-\frac{1}{m}$.
\end{thm}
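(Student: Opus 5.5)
We prove the upper bound by applying Theorem \ref{thm:cutRelator} to a suitable action of $\Gamma$ and adding a cheap graphing of the $\Lambda$-orbits; the $\ell^2$ statement then follows from Peterson--Thom and Gaboriau's inequality.

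\textbf{Step 1: normalise $w$.} Let $t$ generate the $\mathbf Z$ factor. Since $\overline w$ is not conjugate into $\Lambda$, after cyclic conjugation (which changes neither $\Gamma$ nor the hypotheses) we may assume $w$ is cyclically reduced in $\Lambda\ast\mathbf Z$, as an alternating word in syllables of $\Lambda$ and powers of $t$, and that $t$ actually occurs. Set $\Gamma'=\langle\Lambda,\mathbf Z\mid w\rangle$. Since $w$ is not a proper power, Theorem \ref{thm:howielocindic} shows $\Gamma'$ is locally indicable, hence Conradian-orderable by Theorem \ref{thm:locindicconrad}. Let $\pi:\Gamma\to\Gamma'$ be the quotient map.

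\textbf{Step 2: check the hypotheses of Theorem \ref{thm:cutRelator}.} Take generators $S=S_\Lambda\cup\{t\}$, with $S_\Lambda$ a countable generating set of $\Lambda$ that contains each $\Lambda$-syllable of $w$ as a single letter. Write each power $t^{\pm k}$ as $k$ letters $t^{\pm1}$. The path of $w$ in $\Cay(\Lambda\ast\mathbf Z,S)$ is then embedded, and $w^m$ is an embedded relation of $\Gamma$.
\begin{itemize}
\item \emph{The relation $w$ in $\Gamma'$.} Cyclically permute $w$ so that its first and last letters lie in different factors. Proposition \ref{prop:HowieSpelling} then shows $w$ is an embedded relation in $\Cay(\Gamma',S)$.
\item \emph{The relation $w^m$ in $\Gamma$.} A proper subword of $w^m$ has the form $u w^j v$, where $u$ is a terminal piece and $v$ an initial piece of $w$. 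If it represented $1$ in $\Gamma$, projecting to $\Gamma'$ gives $\overline{uv}=1$, so $u$ and $v$ are complementary or both empty. Hence the subword is a pure power $w^k$ with $0<k<m$.
\item \emph{The order of $\overline w$.} It remains to show $\overline w^k\neq 1$ in $\Gamma$ for $0<k<m$. I expect this to follow from Howie's theory of one-relator products with torsion (the analogue of Karrass--Magnus--Solitar), and would cite it. Alternatively one can sidestep it: if $\overline w$ had smaller order the argument still runs with a smaller $m$, which only improves the bound.
\end{itemize}
With $s=t$, the set $Q$ consists of the $t$-edge positions on the cycle $w^m$. The copies of the first $t$-edge of $w$ sit at $1,\overline w,\dots,\overline w^{m-1}$; conjugate so this edge starts at $1$.

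\textbf{Step 3: the main obstacle, positivity of $\pi(P)$.} Every $g\in P$ projects to a position on the single cycle of $w$ in $\Gamma'$. By Step 2 these positions are distinct from each other and from $\pi(1)=\pi(\overline w^k)$. Choose a Conradian order on $\Gamma'$ and cyclically rotate $w$ (equivalently, left-translate the cycle) so that the $<$-minimal $t$-position of $w$ in $\Gamma'$ is $1$. Then every element of $\pi(P)$ is $>1$. One must check that these positions differ from $\pi(1)$ even when $g\in\overline w^k\cdot(\text{another position})$; this holds because $\pi(\overline w)=1$. Proposition \ref{prop:quotientLocIndPositive} then produces a $P$-positive system on $\Gamma$.

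\textbf{Step 4: the cost bound.} Theorem \ref{thm:cutRelator} gives a free action $\Gamma\acts X$ and, for each $\varepsilon>0$, a graphing in which all $t$-edges except a set of measure at most $1-\frac1m+\varepsilon$ have been cut. To avoid paying for the full Schreier graph on $S_\Lambda$, refine the argument. Since the Freiheitssatz (Theorem \ref{thm:freih}) makes $\Lambda\le\Gamma$ embed, $\Lambda\acts X$ is free and has a graphing $\mathcal H$ of cost at most $\cost(\Lambda)+\varepsilon$ after passing to a product with a suitable free $\Lambda$-action (coinduced to $\Gamma$). Each $\Lambda$-edge $(x,\lambda x)$ is joined by an $\mathcal H$-path, so replacing the $\Lambda$-edges of the Schreier graph by $\mathcal H$ preserves connectivity. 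Therefore
\[
\cost(\Gamma)\le\cost(\Lambda)+\varepsilon+1-\tfrac1m+\varepsilon .
\]
Making the coinduction compatible with the coset-process action of Theorem \ref{thm:cutRelator}, via a diagonal product, is a technical point to verify.

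\textbf{Step 5: the moreover statement.} By \cite[Theorem 3.2]{PetersonThom}, imposing the single relation $w^m$ of order $m$ gives
\[
\beta_1^{(2)}(\Gamma)\ge\beta_1^{(2)}(\Lambda\ast\mathbf Z)-\tfrac1m=\beta_1^{(2)}(\Lambda)+1-\tfrac1m .
\]
By the hypothesis $\beta_1^{(2)}(\Lambda)=\cost(\Lambda)-1$, this equals $\cost(\Lambda)-\frac1m$. Gaboriau's inequality gives $\beta_1^{(2)}(\Gamma)\le\cost(\Gamma)-1$, and Step 4 gives $\cost(\Gamma)-1\le\cost(\Lambda)-\frac1m$. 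The chain of inequalities closes, so all three quantities are equal.
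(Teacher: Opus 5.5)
Your route matches the paper's: Howie's theorem and a Conradian order on $\Gamma'$, rotation so that $1$ is minimal in $\pi(Q)$, Proposition \ref{prop:quotientLocIndPositive}, Theorem \ref{thm:cutRelator} with $s=t$, a coinduced cheap $\Lambda$-graphing on $X\times Z$ plus the $t$-edges over $X-A$, then Peterson--Thom and Gaboriau. However, one step is left open.

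You defer the fact that $\overline w$ has order exactly $m$ in $\Gamma$ to an unspecified citation, but the paper already supplies it. $\Gamma=\langle\Lambda,\mathbf Z\mid w^m\rangle$ is itself a one-relator product of locally indicable groups. The path of $w^m$ is embedded in $\Cay(\Lambda\ast\mathbf Z,S)$, and its end letters lie in different factors. So Proposition \ref{prop:HowieSpelling}, applied to the word $w^m$, shows directly that $w^m$ is an embedded relation in $\Gamma$, whence $\overline w^k\neq 1$ for $0<k<m$. This is exactly what the paper does. It makes your second bullet superfluous; that bullet is correct, except that the surviving subwords are cyclic conjugates of $w^k$ rather than $w^k$ itself.

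Your fallback of ``running the argument with a smaller $m$'' is not enough. It rescues the inequality, but the moreover clause needs the exact order $m$ in two places. First, it is the irredundancy hypothesis of \cite[Theorem 3.2]{PetersonThom}. Second, the asserted value is $\cost(\Lambda)-\frac1m$, and if the order were some $m'<m$, your own argument would give $\cost(\Gamma)-1\le\cost(\Lambda)-\frac1{m'}<\cost(\Lambda)-\frac1m$. So the fact has to be proved, not sidestepped.

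You also skip a case the hypotheses allow, since only conjugacy into $\Lambda$ is excluded: $\overline w$ may be conjugate to $t^{\pm1}$. Then $\Gamma'\cong\Lambda$ is not a one-relator product. Hence Theorems \ref{thm:freih} and \ref{thm:howielocindic} and Proposition \ref{prop:HowieSpelling} cannot be invoked as you do, and $w=t$ cannot be rotated to have end letters in different factors. The paper disposes of this case first: $\Gamma\cong\Lambda$ if $m=1$, and $\Gamma\cong\Lambda\ast\mathbf Z/m\mathbf Z$ otherwise, where the bound follows from Gaboriau's free product formula.
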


\begin{proof}[Proof of Theorem \ref{thm:oneRelatorProduct}]
    Let $\Z=\langle s\rangle$. Observe that if $\overline{w}$ is conjugate to an element of $\langle s\rangle$, then $\overline{w}$ is in fact conjugate to $s^{\pm 1}$, since it is not a proper power. If $m=1$, then $\Gamma\cong \Lambda$ and the statement is obvious. Otherwise $m\ge 2$, $\Gamma\cong \Lambda\ast \mathbf Z/m\mathbf Z$, and so  $\cost (\Gamma)\le\cost (\Lambda)+1-\frac 1m$ (in fact, this is an equality by \cite[Th\'eor\`eme VI.7'']{Ga00}). In this case, the moreover follows from the formula for $\ell^2$-Betti numbers of free products, but also from the argument presented further in this proof. So we may assume $\overline{w}$ is not conjugate to a power of $s$, i.e. that $\Gamma$ is a one-relator product. 

   Theorem \ref{thm:howielocindic} states that $\Gamma'=\langle\Lambda, \mathbf Z\mid \overline w\rangle$ is locally indicable.
    Let $\pi:\Gamma\to \Gamma'$ be the quotient map and fix a Conradian order $<$ on $\Gamma'$, by Theorem \ref{thm:locindicconrad}.
    
    The element $\overline w \in \Lambda \ast \Z$ has a normal form $\overline w=s^{n_k}\lambda_k \dots s^{n_2}\lambda_2 s^{n_1}\lambda_1$ with $n_i\not=0$ for $1\le i\le k-1$ and $\lambda_i\not=1$ for $2\le i\le k$. We may thus conjugate to obtain a word $w'=s^{m_\ell}\lambda_\ell' \dots s^{m_2}\lambda_2's^{m_1}\lambda_1'$ with $m_i\not=0$ and $\lambda_i'\not=1$ for $1\le i\le \ell$, since $w$ is not conjugate to an element of $\Lambda$ or $\langle s\rangle$. Thus without loss of generality, we may assume $w=w'$ since $\Gamma =\langle\Lambda, \mathbf Z\mid(w')^m\rangle$ and $\Gamma'=\langle\Lambda, \mathbf Z\mid w'\rangle$. 
    
    Because of the free product structure, both $w$ and $w^m$ are embedded paths in $\Cay(\Lambda\ast \Z, \Lambda\cup\{s\})$.
    By Proposition \ref{prop:HowieSpelling}, $w$ is an embedded relation in $\Cay(\Gamma', \Lambda\cup \{s\})$ and $w^m$ is an embedded relation in $\Cay(\Gamma, \Lambda\cup \{s\})$.

    Let $Q=\{g\in \Gamma: (g,sg)\text{ or }(sg,g)\in w^m\}$. We may assume that $1$ is in $Q$ and is $<$-minimal among the set $\pi(Q)$ by cyclically permuting $w^m$ and $w$, which does not ruin the fact that they are embedded relations. Observe that in this case $\{1,\overline w,\dots, \overline {w}^{m-1}\}\subseteq Q$ and let $P=Q-\{1,\overline w,\dots,\overline w^{m-1}\}$. 
    
    We claim that $\pi(P)$ consists of positive elements of $\Gamma'$. Indeed, if $h\in P$, it may be written as $h=\overline w^r\overline {w_h}$ for some non-empty initial subword $w_h$ of $w$. 
    Since $w$ is embedded in $\Cay(\Gamma', \Lambda\cup \{s\})$, we have $\pi(h)=\pi(\overline w^r\overline {w_h})=\pi(\overline {w_h})\not=1$. By our choice of cyclic permutation, $1$ is minimal among $\pi(Q)$ and so $1<\pi(h)$. 

    By Proposition \ref{prop:quotientLocIndPositive} there exists a $P$-positive system and thus the hypotheses of Theorem \ref{thm:cutRelator} are satisfied.

    Fix $\varepsilon>0$ and apply Theorem \ref{thm:cutRelator} with respect to the generator $s$ and the relation $w^m$ to obtain a free pmp action $\Gamma\acts (X,\mu)$ along with a measurable set $A \subset X$ with $\mu(A)\ge \frac{1}{m}-\frac{\varepsilon}{2}$ such that the Borel graph $\mathcal{G}_X = \Sch(\Gamma \acts X, \Lambda\cup \{s\}) - \{(s.x,x)^{\pm 1} : x\in A \}$ is a graphing of the action $\Gamma\acts X$.

    Fix a free pmp action $\Lambda\acts (Y,\nu)$ along with a graphing ${\mathcal H}_Y$ of the orbit equivalence relation such that $\mathcal C_\nu(\mathcal H_Y)\le \cost(\Lambda)+\frac{\varepsilon}{2}$.  By the Freiheitssatz (Theorem \ref{thm:freih}), the natural map $\Lambda\hookrightarrow\Gamma$ is injective. 
    We thus regard $\Lambda$ as a subgroup of $\Gamma$.
    Then, the \textit{coinduction} construction applied to $\Lambda \acts (Y,\nu)$ gives a free pmp action $\Gamma \acts (Z, \eta)$ and a $\Lambda$-equivariant map $\iota: Z \rightarrow Y$ with $\iota_*\eta=\nu$. See \cite[Section II.10.G]{book:GlobalAspects} for the construction and basic properties.

    Consider the diagonal action $\Gamma\acts (X\times Z, \mu\times\eta)$. Define a graphing $\mathcal H$ of the action $\Lambda\acts X\times Z$ of the same cost as $\mathcal H_Y$ by
    \[
        \mathcal H :=\left\{\big((x,z),\lambda.(x,z)\big )\;\middle|\; \big(\iota(z), \lambda.\iota(z)\big)\in \mathcal H_Y\right\}.
    \]
    Extend this to a Borel graph $\G:=\mathcal H\cup \left\{\left((x,z),s.(x,z)\right)^{\pm1}\;\middle|\; x\in X-A\right\}$ of cost 
    \begin{equation}\label{costComputation}
    \mathcal C_{\mu\times \eta} (\G)\le \mathcal C_{\mu\times \eta} (\mathcal H)+\mu(X-A)\le \cost(\Lambda)+1-\tfrac{1}{m}+\varepsilon.
    \end{equation}  
    
    We claim that $\mathcal G$ is a graphing of $\Gamma\acts X\times Z$. 
    We must verify that $g.(x,z)$ is connected to $(x,z)$ by a path in $\G$ for almost every $(x,z)\in X\times Z$ and every $g\in \Gamma$. The points $g.x$ and $x$ are almost surely connected in $\G_X$ by a path of edges either labeled by elements of $\Lambda$ or labeled by $s$ with a basepoint in $X-A$. Lift this path to a path $p$ in $X\times Z$ starting at $(x,z)$ and ending at $g.(x,z)$ by following the same sequence of labels. Every $s$-labeled edge in $p$ is in the graph $\G$ by definition and the endpoints of every $\Lambda$-labeled edge in $p$ are connected in $\mathcal H\subseteq \G$.

    Since $\G$ is a graphing of a free pmp action of $\Gamma$, $\cost(\Gamma) \leq \mathcal C_{\mu \times \eta}(\G)$, thus sending $\varepsilon$ to $0$ in Equation~\eqref{costComputation} concludes the proof. 

    Regarding the moreover, since the relation $w^m$ is embedded in $\Gamma$, the presentation is irredundant in the sense of \cite[Theorem 3.2]{PetersonThom}. We apply it to obtain $\beta_1^{(2)}(\Gamma)\ge \beta_1^{(2)}(\Lambda)+1-\frac{1}{m}$. Therefore, if $\beta_1^{(2)}(\Lambda)=\cost(\Lambda)-1$, then by the inequality in \cite[Corollaire 3.23]{Ga02L2},
    \[
    \cost(\Lambda)-\frac{1}{m}=\beta_1^{(2)}(\Lambda)+1-\frac{1}{m}\le \beta_1^{(2)}(\Gamma)\le \cost(\Gamma)-1\le \cost(\Lambda)-\frac{1}{m}.\qedhere
    \]
\end{proof}

The main theorem on one-relator groups is a special case of Theorem \ref{thm:oneRelatorProduct} where $\Lambda$ is a free group, using the fact that the cost of a free group $\mathbf F_k$ on $k$ generators is $k = \beta_1^{(2)}(\mathbf{F}_k) + 1$ \cite{Ga00}.
In fact, Theorem \ref{thm:oneRelatorProduct} implies Question \ref{quest:costvsl2} has a positive answer for a larger class of groups.

From \cite{Howie:locInd}, say a presentation $\langle S\mid R\rangle$ is \textbf{reducible} if every finite subpresentation is of the form $\langle s_1,\dots, s_k, t\mid w_1,\dots, w_\ell, u\rangle$ where each of the $w_i$ is a word in $s_1,\dots, s_k$ and $\overline u$ is not conjugate in $\Gamma \ast \langle t\rangle$ to an element of $\Gamma=\langle s_1,\dots, s_k\mid w_1,\dots, w_\ell\rangle$. 

Given a presentation $\langle S\mid R\rangle$ and relation $w\in R$, let $\Gamma_w=\langle S\mid R-\{w\}\rangle$. We say the presentation \textbf{has no proper powers} if $w$ is not a proper power in $\Gamma_w$ for every $w$. If $\langle S\mid R\rangle$ has no proper powers, then so does every subpresentation.

\begin{cor}\label{cor:reducible}
    Let $\Gamma=\langle S \mid R\rangle$ be an infinite group with a finite reducible presentation with no proper powers. Then \[\cost(\Gamma)-1=\beta^{(2)}_1(\Gamma)=|S|-|R|-1.\] 
\end{cor}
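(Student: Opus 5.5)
The plan is an induction on $|R|$ that applies Theorem \ref{thm:oneRelatorProduct} once per relator.

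\emph{Base case.} If $R=\emptyset$, then $\Gamma$ is the free group on $S$. It is infinite, so $|S|\ge 1$, and by \cite{Ga00} we have $\cost(\Gamma)=|S|=\beta_1^{(2)}(\Gamma)+1$.

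\emph{Inductive step.} Suppose $|R|\ge 1$. Since the presentation is finite and reducible, applying the definition to the whole presentation writes it as $\langle s_1,\dots,s_k,t\mid w_1,\dots,w_\ell,u\rangle$. Here each $w_i$ is a word in the $s_j$, and $\overline u$ is not conjugate into $\Lambda:=\langle s_1,\dots,s_k\mid w_1,\dots,w_\ell\rangle$ inside $\Lambda\ast\langle t\rangle$. Then $\Gamma=\langle\Lambda,\Z\mid u\rangle$, and the presentation of $\Lambda$ is again finite, reducible and without proper powers, since these properties pass to subpresentations.

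\emph{Checking the hypotheses of Theorem \ref{thm:oneRelatorProduct} with $m=1$.}
\begin{enumerate}
\item \emph{$\Lambda$ is locally indicable.} I would prove this inside the same induction. It is a one-relator product built from free groups, and each step preserves local indicability by Theorem \ref{thm:howielocindic}, provided the new relator is not a proper power in the free product.
\item \emph{$u$ is not a proper power in $\Lambda\ast\langle t\rangle$.} This follows from ``no proper powers'', because $\Gamma_u=\Lambda\ast\langle t\rangle$.
\item \emph{$\overline u$ is not conjugate into $\langle t\rangle$.} If it were, then, not being a proper power, it would be conjugate to $t^{\pm1}$, so $\Gamma\cong\Lambda$ and the formula $|S|-|R|$ is unchanged. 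I would either treat this case directly or note that it is already covered by the convenience case inside Theorem \ref{thm:oneRelatorProduct}.
\end{enumerate}

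\emph{Infiniteness of $\Lambda$.} To run the induction I need $\Lambda$ infinite. Locally indicable groups are torsion-free, so $\Lambda$ is infinite unless it is trivial. If $\Lambda$ is trivial, then $\Gamma=\langle t\mid u\rangle$ is a one-relator quotient of $\Z$ whose relator is not a proper power. Such a group is trivial or finite unless $u$ is the empty word, which contradicts $\Gamma$ being infinite. So this degenerate case needs a small separate argument, or should be excluded by the reducibility convention.

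\emph{Conclusion.} With $\Lambda$ infinite, the induction hypothesis gives $\cost(\Lambda)-1=\beta_1^{(2)}(\Lambda)=k-\ell-1$. The ``moreover'' part of Theorem \ref{thm:oneRelatorProduct} with $m=1$ then gives
\[
\beta_1^{(2)}(\Gamma)=\cost(\Gamma)-1=\cost(\Lambda)-1=k-\ell=|S|-|R|-1,
\]
since $|S|=k+1$ and $|R|=\ell+1$.

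\emph{Main obstacle.} I expect the hardest part to be the bookkeeping that keeps local indicability and infiniteness going through the induction. Local indicability is handled by Theorem \ref{thm:howielocindic} at every stage. The degenerate cases, namely when $\Lambda$ is trivial or $u$ is conjugate to $t^{\pm1}$, must be handled separately, for example by reordering the generators or observing that $\Gamma\cong\Lambda$.
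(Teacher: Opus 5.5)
Your proposal is correct and follows essentially the same route as the paper: induction on $|R|$, peeling off the top relator via reducibility, and applying Theorem~\ref{thm:oneRelatorProduct} with $m=1$. The differences are that you re-derive local indicability of $\Lambda$ inductively from Theorem~\ref{thm:howielocindic} rather than citing Howie's Corollary~4.5, and that you explicitly check $\Lambda$ is infinite (it is nontrivial and torsion-free), a point the paper leaves implicit; one small slip is that the middle term of your final display should be $k-\ell-1$, not $k-\ell$, since $\cost(\Lambda)-1=k-\ell-1=|S|-|R|-1$.
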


\begin{proof}
    We induct on $|R|$. If $|R| = 0$, the group is free and the statement holds by \cite{Ga00}.
    
    Else, by reducibility, there exist a generator $s \in S$ and a relation $w \in R$ so that every element of $R' := R- \{w\}$ is a word in $S' := S - \{s\}$ and $\overline{w}$ is not conjugate in $\Gamma' \ast \langle s \rangle$ to an element of $\Gamma'$, where $\Gamma' = \langle S' \mid R' \rangle$. Further, since $\langle S \mid R\rangle$ has no proper powers,  $\langle S' \mid R' \rangle$ has no proper powers, and so $w$ is not a proper power in $\Gamma_w = \Gamma' \ast \langle s \rangle$. Any subpresentation of a reducible presentation is clearly reducible. 
    As a result $\Gamma\cong\Gamma'\ast \langle s\rangle/\langle\!\langle \overline w\rangle\!\rangle$, where $\Gamma'$ satisfies the assumption of the Corollary, with $|S'| = |S| - 1$ and $|R'| = |R| - 1$.
    
    By the induction hypothesis, $\cost(\Gamma')-1=\beta^{(2)}_1(\Gamma')=|S'|-|R'|-1$. Since $\Gamma'$ has a reducible presentation with no proper powers, $\Gamma'$ is locally indicable \cite[Corollary 4.5]{Howie:locInd} and we may apply Theorem \ref{thm:oneRelatorProduct} (with $m=1$) to obtain 
    \[\cost(\Gamma)-1=\beta^{(2)}_1(\Gamma)=|S'|-|R'|-1 = |S| - |R| - 1.\qedhere\]
\end{proof}

\begin{remark}
    Since Theorem \ref{thm:oneRelatorProduct} also applies to torsion relators, Corollary \ref{cor:reducible} can be extended to reducible presentations where the only torsion relation $w^m$ is the ``top'' one to obtain $\cost(\Gamma)-1=\beta_1^{(2)}(\Gamma)=|S|-|R|-\frac{1}{m}$.
\end{remark}

\bibliography{references}
\bibliographystyle{alphaurl}

\end{document}